\definecolor{dark-red}{rgb}{0.4,0.15,0.15}
\definecolor{dark-blue}{rgb}{0.15,0.15,0.4}
\definecolor{dark-green}{rgb}{0.15,0.4,0.15}
\numberwithin{equation}{section}
\theoremstyle{plain}
\newtheorem{theorem}{Theorem}[section]
\newtheorem{proposition}[theorem]{Proposition}
\newtheorem{lemma}[theorem]{Lemma}
\newtheorem{corollary}[theorem]{Corollary}
\theoremstyle{definition}
\newtheorem{definition}[theorem]{Definition}
\newtheorem{example}[theorem]{Example}
\newtheorem{remark}[theorem]{Remark}
\providecommand{\keywords}[1]{\textbf{\textit{Key words and phrases:}} #1}
\providecommand{\subjclass}[1]{\textbf{\textit{2020 Mathematics Subject Classification:}} #1}
\newcommand{\di}{\mathbb{D}}
\newcommand{\s}{\mathbb{S}}
\newcommand{\ZZ}{\mathbb{Z}}
\newcommand{\RR}{\mathbb{R}}
\title{On 4-dimensional 3-handle attachments}
\author{Eva Horvat}
\address{University of Ljubljana, Faculty of Education, Kardeljeva plo\v s\v cad 16, 1000 Ljubljana, Slovenia}
\email{\href{mailto:eva.horvat@pef.uni-lj.si}{eva.horvat@pef.uni-lj.si}}
\author{Micha{\l} Jab{\l}onowski}
\address{Institute of Mathematics, Faculty of Mathematics, Physics and Informatics, University of Gda\'nsk, 80-308 Gda\'nsk, Poland}
\email{\href{mailto:michal.jablonowski@gmail.com}{michal.jablonowski@gmail.com}}
\subjclass[2020]{57R65 (primary), secondary: 57R52}
\keywords{low-dimensional topology, 4-manifolds, handlebody, Kirby calculus}
\date{\today}
\begin{document}
	
\maketitle


\begin{abstract}
Kirby diagrams for smooth four-dimensional manifolds typically depict only the 1- and 2-handles, omitting the 3-handles. In this work, we investigate 3-handle attachments and provide tools to explicitly include them in handle diagrams. We show a set of moves involving 3-handles to extend the classical Kirby calculus. Under assumptions and the condition that the number of 3-handles equals the rank of the spherical part of the specific boundary’s second homology group, we establish a homological criterion that identifies a geometric basis of disjoint embedded spheres in the boundary corresponding to 3-handle attachments, yielding a uniqueness theorem for 3-handle attachments.
\end{abstract}

\section{Introduction}\label{Introduction}
We work in the smooth ($C^{\infty}$) category. All manifolds will be assumed to be compact. We study the attachment of $3$-handles in smooth $4$-manifolds. A handle decomposition of a smooth connected $4$-manifold $X$ is often represented by a Kirby diagram that specifies the attachment of handles of index $1$ and $2$ to the boundary sphere of the $0$-handle. Handles of index $3$ have trivial framing, so their attachments are completely specified by the embedding of their attaching $2$-spheres into the $3$-manifold $\partial X_2$. However, these attaching spheres are rarely drawn for two reasons (other than the fact that it is not so easy to visualize a nontrivial $2$-sphere in the $3$-dimensional picture):

\begin{enumerate}
	\item If $X$ is closed, we may assume that there is a unique $4$-handle, thus by duality, the union of the $3$- and $4$-handles is diffeomorphic to the handlebody $\natural _{k}\,\s^{1}\times \di ^{3}$, where $k$ is the number of $3$-handles. By \cite{LP72}, any self-diffeomorphism of $\# _{k}\,\s^{1}\times \s^{2}$ extends over $\natural _{k}\,\s^{1}\times \di^{3}$, so if $\partial X_2=\# _{k}\,\s^{1}\times \s^{2}$, there is a unique closed $4$-manifold that can be obtained by attaching the $3$- and $4$-handles to $X_2$. 
	\item If $\partial X\neq \emptyset $ is connected and $X$ is simply connected, then it follows from \cite{Tr82} that the $4$-manifold $X$ is uniquely determined by $X_2$ and the number of $3$-handles. 
\end{enumerate}


This paper is organized as follows. 
In Section \ref{Preliminaries}, we review some background on handle decompositions and handle calculus. In Section \ref{4-manifolds}, we recall the basic facts of Kirby calculus and include some $3$-manifold results that will be used to study $4$-manifold boundaries in the further sections.\\
Section \ref{sec_diagrams} introduces a diagrammatic framework to represent $3$-handle attachments in Kirby diagrams, explaining how to draw and manipulate the attaching $2$-spheres of $3$-handles. It defines an extended notation, which is the usual Kirby diagram (specifying the attaching spheres of $0$-, $1$-, and $2$-handles of a $4$-handlebody), augmented with embedded $2$-spheres indicating the attaching spheres of $3$-handles. Subsection \ref{sub_moves} presents a set of moves extending Kirby calculus to diagrams with $3$-handle information, which combine standard handle slides and isotopies of $1$- and $2$-handles with moves involving the $3$-handle attaching spheres. \\
In Section \ref{sec_Uniqueness}, we prove that in a handle decomposition of a $4$-manifold $W$ with boundary $M'\#(\#\;\s^{1}\times \s^{2})$ with $M'$ irreducible, whose rank of $H_2^{\mathrm{sph}}(\partial W_2)$ equals the number of $3$-handles, the $3$-handle attachments correspond to a geometric basis of $H_2^{\mathrm{sph}}(\partial W_2)$. The final Subsection \ref{An_example} applies these ideas to an example.


\section{Preliminaries on handle decompositions}\label{Preliminaries}

One can find an introduction to the topic of handle decompositions (especially in dimension four) in \cite{Akb16, GS99, Kir89} and a more recent paper \cite{Bar23}.

We will denote by $\di^n=\{x\in \mathbb{R}^{n}:|x|\leq 1\} $ the $n$-dimensional disc, and by $\s^n=\partial \di^{n+1}=\{x\in \mathbb{R}^{n+1}:|x|=1\}$ the $n$-dimensional sphere. The symbol $\cong$ will denote equivalence in a given context: diffeomorphism for smooth manifolds, homeomorphism for topological manifolds, or isomorphism for groups.

An \emph{$n$-dimensional $k$-handle} (for $0\leq k \leq n$) is the manifold with corners $h_k = \di^k \times \di^{n-k}$, attached to the boundary of an $n$-manifold $X$ along $\partial \di^k\times \di^{n-k}$. The disc $\di^{k} \times \{0\} \subset h_k$ is called the \emph{core} of the handle, and $ \{0\} \times \di^{n-k} \subset h_k$ is called the \emph{cocore}.

From its corner structure, the boundary of a $k$-handle $\partial h_k = \s^{k-1} \times \di^{n-k} \cup \di^k \times \s^{n-k-1}$ is split into the \emph{attaching region} $\partial_a h_k = \s^{k-1} \times \di^{n-k}$ and the \emph{remaining region} $\partial_r h_k = \di^k \times \s^{n-k-1}$. The sphere $\s^{k-1} \times \{0\} \subset \partial_a h_k$ is called the \emph{attaching sphere}, while $\{0\} \times \s^{n-k-1} \subset \partial_r h_k$ is called the \emph{remaining sphere} or the \emph{belt sphere}. See schematic picture in Figure \ref{handles_general}.

There is a canonical way to smooth the corners, so we may assume the space after attaching handles is a smooth $n$-manifold with boundary. In particular, we can smooth the corners of a handle by attaching it to the boundary of the manifold via a collar neighborhood.

\begin{figure}[h!t]
	\begin{overpic}[scale=0.185]{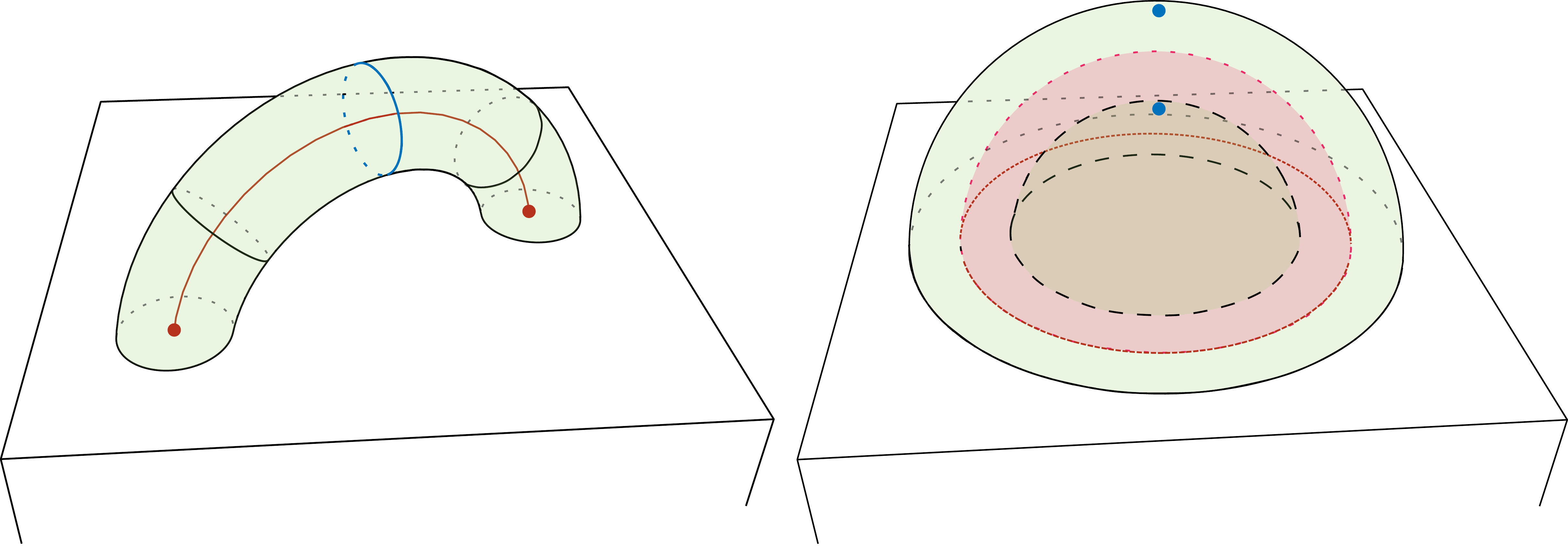}
		\put(42, 33){\textcolor{RoyalBlue}{belt sphere}}
		\put(58,33){\color{RoyalBlue}\vector(2,0.15){15}}
		\put(58,33){\color{RoyalBlue}\vector(3.3,-1.05){15.2}}
		\put(39.5,33){\color{RoyalBlue}\vector(-3.3,-0.8){14.4}}
		\put(45, 22){\textcolor{WildStrawberry}{core}}
		\put(53.8,23){\color{WildStrawberry}\vector(2,0.1){8}}
		\put(42,23){\color{WildStrawberry}\vector(-2,0.1){8.3}}
		\put(37, 29){\textcolor{ForestGreen}{remaining region}}
		\put(59,30){\color{ForestGreen}\vector(2,0.2){3.2}}
		\put(59,30){\color{ForestGreen}\vector(3.3,-2){7.5}}
		\put(39.5,28){\color{ForestGreen}\vector(-3.3,-1.5){3.6}}
		\put(36, 11){\textcolor{BrickRed}{attaching sphere}}
		\put(57,12){\color{BrickRed}\vector(2,1){6.1}}
		\put(34,12){\color{BrickRed}\vector(-4,0.3){22}}
		\put(34,12){\color{BrickRed}\vector(-0.5,11.1){0.39}}
		\put(6, 8){$\partial X$}
		\put(89, 9){$\partial X$}
		\put(6, 2){$X$}
		\put(89, 4){$X$}
	\end{overpic}
	\caption{A schematic picture of adding a $1$-handle (left) and a $2$-handle (right) in the $3$-dimensional case. \label{handles_general}}
\end{figure}

\par 
The number $k$ is called the \emph{index of the handle}. 
Given an embedding $\s^{k-1} \to Y^{n-1}$ with a trivial normal bundle, fix a framing of this bundle. Every framing now corresponds to a map $\s^{k-1} \to O(n-k)$. If we declare the canonical attachment framing (a reference framing) as the $0$-framing, then a choice of any other \emph{framing} corresponds to an element of $\pi_{k-1}(O(n-k)).$
\par 
Every smooth compact manifold $M$ can be constructed by a series of handle attachments. A \emph{handle decomposition} (or \emph{handle structure}) of an $n$-manifold $M$ is a representation of $M$ as a finite sequence of attaching handles. The manifold $M$ can have non-empty boundary $\partial M=\partial_{-}M\coprod \partial_{+}M$, in which case we are talking about a \emph{handle decomposition of $M$ relative to} $\partial_{-}M$ \cite{Mil65}.
\par 
A \emph{$k$-handle attaching map} $\phi$ on an $n$-manifold $M$ is an embedding of the attaching region $\partial_a h_k$ of an $n$-dimensional $k$-handle into $\partial M$. The result of a handle attachment $\phi\colon \partial_a h_k \hookrightarrow \partial M$ is the manifold $M \cup_\phi h_k$, where the handle is glued along the embedded attaching region.
\par
A \emph{$(-1)$-handlebody} is the empty set. A \emph{$k$-handlebody} is obtained from a $(k-1)$-handlebody by successively attaching $k$-handles. A handle decomposition of a manifold $M^n$ is a diffeomorphism to an $n$-handlebody.
By the Isotopy Extension Theorem \cite[p.180]{Hir76}, the manifold $M \cup_\phi h_k$ is determined (up to diffeomorphism) by the isotopy class of the attaching sphere and the framing of $h_k$.

\subsection{Handle calculus}

\begin{remark}
	We can perform an \emph{attachment sequence reordering}, i.e., if the handle $h'$ is attached after the handle $h$, so that the attaching sphere of $h'$ misses the belt sphere of $h$, then we may reorder the handle attachments so that $h$ is attached after $h'$ without changing the diffeomorphism type of the resulting manifold. Thus, we can arrange the handles to be attached in the increasing order of their indices. In the remainder of this paper, we will always assume our handle decompositions satisfy this condition. Given a handle decomposition of a manifold $M$, we will denote by $M_{k}$ the $k$-handlebody that is the result of the attachments of all handles of indices $i$ with $0\leq i\leq k$.
\end{remark}

\begin{remark}
	A $k$-handle can be isotoped over another $r$-handle (for $r\leq k$) in a canonical way; this is called a \emph{handle slide}. Since the attaching order for handles of the same level can be changed arbitrarily, any $k$-handle can be slid over any other $k$-handle.
\end{remark}

\begin{definition}
	A $k$-handle $h_k$ and a $(k+1)$-handle $h_{k+1}$ are \emph{cancellable} if the attaching sphere of $h_{k+1}$ intersects the remaining sphere (the belt sphere) of $h_k$ transversely in one point. The reverse process of canceling two handles is called \emph{handle creation} (or \emph{handle birth}).
\end{definition}

The terminology of cancellable handles is explained by the statement that $M \cup h_k \cup h_{k+1} \cong M$ if $h_k$ and $h_{k+1}$ are cancellable \cite{Mil65}. This diffeomorphism is called \emph{canceling} the handle pair. There is a well-known complete set of moves to translate between any two diffeomorphic handle decompositions.

\begin{theorem}[\cite{Cer70}]\label{thCer}
	Two handle decompositions of the same compact, smooth manifold are related by a finite sequence of handle attachment map isotopies (including handle slides), attachment sequence reorderings, and handle pair cancellations/creations.
\end{theorem}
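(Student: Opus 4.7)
The plan is to translate the statement into the language of Morse theory and then invoke Cerf's structural theorem on one-parameter families of smooth functions. A handle decomposition of $M^n$ with handles attached in non-decreasing index order corresponds to a Morse function $f\colon M\to[0,1]$ whose critical values are distinct and ordered by index (with appropriate boundary conditions on $\partial M$). Given $f$ together with a gradient-like vector field, the sublevel set $M_t = f^{-1}([0,t])$ picks up a $k$-handle whenever $t$ crosses an index-$k$ critical value, and conversely every handle decomposition arises this way. Under this dictionary, the data of a handle decomposition up to attaching-map isotopy --- including the framing data, which is encoded in the local model of the gradient flow near each critical point --- matches the data of an ordered Morse function up to isotopy through admissible gradient-like vector fields.

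Next, I would interpret each of the three moves of the theorem as an elementary modification in this Morse-theoretic picture. Handle attaching-map isotopies, including handle slides, correspond to deforming the gradient-like vector field while holding the Morse function fixed; a handle slide is exactly what one sees when the stable manifold of one $k$-handle sweeps across the unstable manifold of another during such a deformation. Attachment sequence reorderings correspond to a one-parameter family $f_s$ in which, at a single instant, two critical points briefly share a critical value and then swap their relative height, with no critical point being created or destroyed. Handle pair cancellations and creations correspond to birth-death singularities: a path $f_s$ that, at a single instant, acquires a degenerate critical point of the local form $-x_1^2-\dots-x_k^2 + x_{k+1}^3 + x_{k+2}^2+\dots+x_n^2$, through which two Morse critical points of consecutive indices $k$ and $k+1$ are born or die together.

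The final step is to invoke Cerf's theorem on the space $C^\infty(M,\mathbb{R})$. Since this space is contractible, any two Morse functions $f_0$ and $f_1$ realizing the two given handle decompositions can be joined by a smooth path $\{f_s\}_{s\in[0,1]}$. Cerf's theorem says that, after a generic perturbation, this path is Morse at all but finitely many parameter values $s_1<\dots<s_N$, at each of which exactly one crossing of critical values or one birth-death singularity occurs. Between consecutive exceptional times the Morse data stays constant up to isotopy of the gradient-like vector field, which translates into an isotopy of the handle attaching maps. Concatenating the finite list of elementary bifurcations at $s_1,\dots,s_N$ therefore yields the required finite sequence of handle slides, reorderings, and pair cancellations/creations. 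The hard part is the transversality input behind Cerf's theorem itself --- showing that the codimension-one strata of non-Morse functions consist precisely of crossings and birth-deaths, and that a generic path transversely meets these strata while missing all higher-codimension strata. This is the deep technical content supplied by \cite{Cer70}; once it is granted, translating the bifurcations back through the Morse/handle dictionary is essentially bookkeeping.
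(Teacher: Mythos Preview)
The paper does not supply its own proof of this theorem; it is stated as a background result with a bare citation to Cerf \cite{Cer70}. Your proposal correctly identifies and outlines the standard Cerf-theoretic argument that the citation is pointing to: the dictionary between handle decompositions and ordered Morse functions, the interpretation of the three moves as isotopies of gradient-like vector fields, critical-value crossings, and birth--death singularities, and finally the appeal to Cerf's stratification of $C^\infty(M,\mathbb{R})$ to put a generic path between two Morse functions into a form where only these codimension-one events occur. As you note, the substantive work lies entirely in the transversality and stratification results of \cite{Cer70}; your sketch of the translation layer is accurate and is exactly what a reader following the citation would find.
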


\subsection{Surgery}

Surgery is a procedure of changing one ($n$-dimensional) manifold into another of the same dimension, by excising a copy of $\s^k \times \di^{n-k}$ for some $k$, and replacing it with $\di^{k+1} \times \s^{n-k-1}$, glued along the common boundary $\s^k \times \s^{n-k-1}$.

\begin{remark}
	When attaching a $k$-handle to the boundary of a manifold $M$ with an attaching map $\phi$, the attaching region is removed and the remaining region is added.
	The boundary of the result of a handle attachment is thus:
	\[ \partial (M \cup_\phi h_k) = \partial M \backslash \phi(\partial_a h_k) \cup \partial_r h_k \]
	Therefore, attaching handles changes the boundary through surgery.
\end{remark}

\section{Decompositions of $4$-manifolds}\label{4-manifolds}

\subsection{Kirby calculus}

A handle decomposition of a compact $4$-manifold consists of finitely many $k$-handles of indices $k\in\{0,1,2,3,4\}$.

Framings are as follows. A $0$-handle has no attaching region, and the framing of a $1$-handle, as specified by $\pi_{0}(O(3))\cong\ZZ_2$, is unique provided the manifold remains orientable. The framing of a $2$-handle is given by an element of $\pi_{1}(O(2))\cong\ZZ$. A $3$-handle has a unique framing since $\pi_{2}(O(1))\cong 0$, and a $4$-handle has a unique framing since $\pi_{3}(O(0))\cong 0$.

\begin{remark}
	If the manifold $M^4$ is closed or $M_2^4$ has boundary $\#_{m}(\s^1\times \s^2)$ for some $m$ (in case $m=0$ we define it to be $\s^3$), then by the theorem of Laudenbach and Poenaru \cite{LP72} there is no need to specify (or draw) the $3$- and $4$-handles $h$ because $M \cup_\phi h\cong M \cup_\psi h$, for all gluings $\phi, \psi$, so the $3$-handles and $4$-handles are attached uniquely.
\end{remark}

The attachment of $1$- and $2$-handles can be described diagrammatically by drawing classical links in the $3$-sphere $\s^3$, which represents the boundary of the $0$-handle. 
\par 
Thus, we can represent any closed, smooth, oriented, connected $4$-manifold by a $3$-dimensional diagram, with links decorated with dots (see Akbulut's convention \cite{Akb16}) for $1$-handles and framed links for $2$-handles. A \textit{framed link} $L$ is a classical link in $\mathbb{S}^3$ with an integer (the framing) associated to each component $\gamma$ of the link. Framed links represent the attaching spheres of the $2$-handles. The zero framing corresponds to the trivialization of the normal bundle, which would extend over a Seifert surface for $\gamma$ pushed into $\mathbb{D}^4$. A circle with framing $k$ means that the image of the attaching region differs from the zero framing by $k$ full twists around the image of the attaching sphere of the $2$-handle (right-handed for $k>0$, left-handed for $k<0$).

\par
The dotted components can be arranged as split unknots. Every arc of the attaching circle of a $2$-handle that goes over the $1$-handle (along its core interval) should link the dotted circle with linking number one. These links in $\s^3$ can be described by $2$-dimensional diagrams in general position (as in the classical knot theory), called the \emph{Kirby diagrams}. The moves (handle slides and handle pair cancellations/creations) on these diagrams are called \emph{Kirby calculus}. They are shown in Figure \ref{h3moves1} with additional $3$-handle information that we will discuss in Section \ref{sec_diagrams}.

\par 
We can easily read off a presentation for $\pi_1(X)$, namely, the generators correspond to meridians of the dotted circles, and relators are words in terms of generators read as we travel along the framed components. Each time we intersect the disc bounded by a dotted circle, we add the letter corresponding to its generator with an exponent equal to $\pm 1$, depending on which side of the disc we approach the intersection.


\subsection{The $3$-manifold context}

We assume all our $3$-manifolds are closed, connected, and orientable.

\begin{theorem}[Lickorish-Wallace \cite{Lic62, Wal60}]
	Every closed, connected, orientable $3$-manifold can be obtained by surgery on a framed link in $\s ^3$.
\end{theorem}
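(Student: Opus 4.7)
The plan is to convert a Heegaard splitting of the target 3-manifold into a surgery description, using the fact that every self-diffeomorphism of a closed orientable surface is a product of Dehn twists. First I would invoke the existence of a Heegaard splitting: every closed connected orientable 3-manifold $M$ decomposes as $M = H_{g} \cup_{\phi} H_{g}'$, where $H_{g}$ and $H_{g}'$ are genus-$g$ handlebodies glued along their common boundary by an orientation-reversing diffeomorphism $\phi \colon \partial H_{g} \to \partial H_{g}'$. This is obtained, for example, from a handle decomposition of $M$ by thickening the 0- and 1-handles into $H_{g}$ and dually the 2- and 3-handles into $H_{g}'$, after arranging indices to be increasing (as in the reordering remark above).

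Next I would compare the given Heegaard splitting with the standard genus-$g$ Heegaard splitting $\s^{3} = V_{g} \cup_{\phi_{0}} V_{g}'$ in which both handlebodies are standardly embedded. Up to isotopy we may assume $H_{g} = V_{g}$ and $H_{g}' = V_{g}'$, so $M$ is determined by the class of $\phi\,\phi_{0}^{-1}$ in the mapping class group $\mathrm{MCG}(\Sigma_{g})$ of the genus-$g$ surface. By Lickorish's theorem, $\mathrm{MCG}(\Sigma_{g})$ is generated by Dehn twists along a finite family of simple closed curves on $\Sigma_{g}$, so I can write $\phi\,\phi_{0}^{-1}$ as a product $\tau_{\gamma_{1}}^{\varepsilon_{1}} \cdots \tau_{\gamma_{N}}^{\varepsilon_{N}}$ with $\varepsilon_{i} \in \{\pm 1\}$ and each $\gamma_{i}$ a simple closed curve on $\Sigma_{g} \subset \s^{3}$.

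The remaining geometric step is to realize each Dehn twist $\tau_{\gamma_{i}}^{\varepsilon_{i}}$ by Dehn surgery. Pushing $\gamma_{i}$ slightly off the Heegaard surface into one handlebody produces a knot $K_{i} \subset \s^{3}$ with a canonical surface framing; I would show by a direct model computation in an annular neighborhood of $\gamma_{i}$ that $(-\varepsilon_{i})$-surgery on $K_{i}$, relative to this surface framing, changes the gluing map by exactly $\tau_{\gamma_{i}}^{\varepsilon_{i}}$ and leaves the ambient manifold elsewhere untouched. Iterating this along the factorization, and pushing the $K_{i}$ to disjoint parallel levels so they form an embedded framed link $L \subset \s^{3}$, yields $M$ as surgery on $L$.

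The main obstacle is the appeal to Lickorish's theorem that Dehn twists generate $\mathrm{MCG}(\Sigma_{g})$; this is a genuinely nontrivial combinatorial-topological result which I would cite rather than reprove. The surgery realization of a single Dehn twist is comparatively elementary, amounting to a local model on a thickened annulus, and the passage from pushed-off curves to a disjoint framed link is handled by generically perturbing the levels so the components of $L$ are pairwise disjoint.
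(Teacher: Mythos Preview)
The paper does not prove this statement: it is quoted as a classical background result (the Lickorish--Wallace theorem) in the 3-manifold context subsection, with no argument supplied. Your proposal is the standard Lickorish proof---Heegaard splitting, factorization of the gluing map into Dehn twists via Lickorish's generation theorem for $\mathrm{MCG}(\Sigma_g)$, and realization of each twist by $\pm 1$ surgery on a push-off---and it is correct as outlined; there is simply nothing in the paper to compare it against.
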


If a handle decomposition of a $4$-manifold $X$ is given by a Kirby diagram, then the $3$-manifold $\partial X_2$ is obtained exactly by first replacing each dotted circle by a $0$-framed component and then doing surgery on the framed link of the $2$-handle attaching circles. The fundamental group and homology information about $\partial X_2$ can be read off from that link (homology is directly related to the linking matrix).

\begin{corollary}[Lickorish-Wallace \cite{Lic62, Wal60}]
	Every smooth closed, connected, orientable $3$-manifold is the boundary of a smooth, orientable, simply connected $4$-manifold.
\end{corollary}

\begin{remark}
	From the above, we conclude that if we want to find a $4$-manifold $X$ whose $3$-handles are not uniquely specified (i.e., $X$ is not determined by $X_2$ and the number of $3$-handles), then this property cannot be deduced from the intrinsic properties of $\partial X$ if the boundary is connected, because we can find a simply connected $X'$ such that $\partial X = \partial X'$ and we know that $3$-handles in $X'$ are unique.
\end{remark}

\begin{lemma} \label{lemma2} Let $M$ be a compact, connected and orientable $3$-manifold, and let $S\subset M$ be a connected orientable surface, properly embedded in $M$ as a closed subset (if $M$ has boundary, then $\partial S\subset \partial M$). Then $M\backslash S$ has either one or two components. If $H_{1}(M;\ZZ _{2})=0$, then $M\backslash S$ has two components.   
\end{lemma}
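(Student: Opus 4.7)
The plan is to exploit that $S$ is two-sided in $M$ (both $M$ and $S$ are orientable and $S$ has codimension one, so the normal line bundle of $S$ is orientable and hence trivial). Then $S$ admits a closed bicollar $N \cong S \times [-1, 1]$ with $S = S \times \{0\}$ (meeting $\partial M$, when nonempty, in $\partial S \times [-1, 1]$). Let $M^{*} = M \setminus \mathrm{int}(N)$; its internal boundary is $S_{-} \sqcup S_{+}$ where $S_{\pm} = S \times \{\pm 1\}$, each a copy of the connected surface $S$. Since each of $S_{\pm}$ is connected, at most two components of $M^{*}$ meet $\partial N$, and any further component would be disjoint from $N$ altogether, hence clopen in $M = M^{*} \cup N$, contradicting connectedness. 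So $M^{*}$ has at most two components. Now $M \setminus S$ is obtained from $M^{*}$ by re-attaching the open collars $S \times (-1, 0)$ and $S \times (0, 1)$ along $S_{-}$ and $S_{+}$; each collar deformation retracts onto its end, so the component count is unchanged and $M \setminus S$ has one or two components.

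For the $\ZZ_2$-homology criterion I would apply Mayer-Vietoris to the open cover $M = U \cup V$ with $U = \mathrm{int}(N)$ and $V = M \setminus S$, so that $U \cap V = S \times \bigl((-1, 0) \sqcup (0, 1)\bigr)$ has two components and $H_0(U \cap V; \ZZ_2) \cong \ZZ_2^{2}$. The relevant segment of the sequence is
\[
H_1(M; \ZZ_2) \longrightarrow H_0(U \cap V; \ZZ_2) \longrightarrow H_0(U; \ZZ_2) \oplus H_0(V; \ZZ_2).
\]
If $V$ were connected, both generators of $H_0(U \cap V; \ZZ_2)$ would map to $(1, 1) \in H_0(U; \ZZ_2) \oplus H_0(V; \ZZ_2) = \ZZ_2 \oplus \ZZ_2$; the middle arrow would then have kernel $\ZZ_2$, and by exactness $H_1(M; \ZZ_2)$ would surject onto this kernel, forcing it to be non-zero. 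Contrapositively, $H_1(M; \ZZ_2) = 0$ implies $V = M \setminus S$ has exactly two components.

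The only delicate point I anticipate is arranging the tubular neighborhood and its complement cleanly when $\partial M \neq \emptyset$, so that $N$, $M^{*}$, $U$, and $V$ all have the expected local product structure and Mayer-Vietoris applies verbatim; this is standard for properly embedded two-sided surfaces, and the remainder of the argument is straightforward bookkeeping.
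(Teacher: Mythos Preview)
Your argument is correct. Both halves go through as written: the bicollar argument cleanly bounds the number of components by two, and your Mayer--Vietoris computation for the open cover $U=\mathrm{int}(N)$, $V=M\setminus S$ correctly forces a nontrivial kernel in $H_0(U\cap V;\ZZ_2)\to H_0(U;\ZZ_2)\oplus H_0(V;\ZZ_2)$ when $V$ is connected, hence $H_1(M;\ZZ_2)\neq 0$.

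The paper takes a different, more compressed route: it uses the long exact sequence of the pair $(M,M\setminus S)$ in $\ZZ_2$-coefficients,
\[
\ldots \to H_{1}(M;\ZZ_2)\to H_{1}(M,M\setminus S;\ZZ_2)\to \widetilde H_{0}(M\setminus S;\ZZ_2)\to \widetilde H_{0}(M;\ZZ_2)=0,
\]
together with the duality/Thom-type identification $H_{1}(M,M\setminus S;\ZZ_2)\cong H_c^{2}(S;\ZZ_2)\cong \ZZ_2$. Both conclusions then drop out of one exact sequence: the surjection onto $\widetilde H_0(M\setminus S;\ZZ_2)$ from a group of order two gives ``one or two components'', and vanishing of $H_1(M;\ZZ_2)$ upgrades the surjection to an isomorphism. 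Your approach trades that single duality citation for an explicit bicollar construction plus Mayer--Vietoris; it is slightly longer but entirely self-contained and avoids invoking $H_c^{2}(S;\ZZ_2)$ (which in the boundary case tacitly uses Lefschetz duality). In effect, the excision step hidden inside the paper's duality isomorphism is exactly your tubular-neighborhood decomposition, so the two arguments are close cousins viewed through different exact sequences.
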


\begin{proof} We use the long exact sequence of the pair $(M,M\backslash S)$ with $\ZZ _2$ coefficients:
\begin{xalignat*}{1}
& \ldots \to H_{1}(M;\ZZ _{2})\to H_{1}(M,M\backslash S;\ZZ _2)\to \widetilde{H}_{0}(M\backslash S;\ZZ _2)\to \widetilde{H}_{0}(M;\ZZ _{2})=0
\end{xalignat*} and the duality isomorphism $H_{1}(M,M\backslash S;\ZZ _2)\cong H_{c}^{2}(\text{Int}\; S;\ZZ _{2})\cong \ZZ _{2}$. 
\end{proof}

\begin{definition}
	Let $M$ be a $3$-manifold, and $\Sigma \subset M$ an embedded sphere. If $M\backslash \Sigma $ has two connected components, then $\Sigma $ is called a \underline{separating sphere}, otherwise it is called a \underline{nonseparating sphere}. If $\Sigma $ does not bound a $\mathbb{D}^3$ in $M$, then it is called an \underline{essential} sphere; otherwise it is called \underline{inessential}.
\end{definition}

A $3$-manifold is called \emph{irreducible} if it does not contain an essential $\mathbb{S}^2$. A $3$-manifold is \emph{prime} if it cannot be expressed as a connected sum of two $3$-manifolds, neither of which is $\s^{3}$. Every $3$-manifold is a connected sum of prime manifolds; this representation is unique up to order and orientation-preserving homeomorphisms \cite{Kne29, Mil62}.
A prime orientable $3$-manifold is either irreducible or homeomorphic to $\mathbb{S}^1\times \mathbb{S}^2$.

\begin{theorem}\cite{Lau73}
If two smoothly embedded spheres in a $3$-manifold are (freely) homotopic, then they are isotopic.
\end{theorem}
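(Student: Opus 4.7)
My plan is to reduce to the Kneser--Milnor prime decomposition of $M$ and to use innermost-disc arguments, separating into the inessential and essential cases for the spheres $S_0,S_1\subset M$ joined by a free homotopy $H\colon \s^2\times [0,1]\to M$.

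First I would treat the case in which $S_0$ bounds an embedded $\di^3$. Using general position and innermost-disc surgeries on $S_1$, together with the information carried by $H$, I would arrange that $S_1$ also bounds an embedded 3-ball. At that point, the classical fact that any two embedded 3-balls in a connected smooth 3-manifold are ambient isotopic (Palais's disc theorem) yields an ambient isotopy from $S_0$ to $S_1$ directly.

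If instead $S_0$ is essential, I would fix a maximal system $\Sigma \subset M$ of pairwise disjoint essential spheres realizing the prime decomposition of $M$. Using general position and innermost-disc arguments, together with irreducibility of each prime factor distinct from $\s^1\times \s^2$, I would normalize each $S_i$ so that it is either isotopic to a component of $\Sigma $ or sits in a single $\s^1\times \s^2$ summand as a parallel copy of $\{\ast\}\times \s^2$. The homotopy $H$ forces $[S_0]=[S_1]$ in $\pi_2(M)$, and a careful accounting with respect to the prime decomposition (using the $\ZZ[\pi_1(M)]$-module structure on $\pi_2(M)$ given by the sphere theorem, with generators coming from the standard spheres of the decomposition) should force both normalized positions to identify the same standard sphere. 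Parallelism then supplies the desired isotopy.

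The main obstacle I anticipate is the joint normalization step. One has to control the homotopy $H$ itself, not merely its endpoints: intersections of $H$ with $\Sigma $ can produce extraneous 2-spheres when one tries to promote $H$ to an ambient isotopy, and these must be cancelled or identified as parallel to standard spheres. Laudenbach's original tower-of-covers argument is the standard tool, peeling off intersection layers inductively; executing it carefully while keeping both $S_0$ and $S_1$ in canonical position relative to $\Sigma $ is the technical heart of the proof, and it is where any attempt to avoid the deeper 3-manifold machinery (sphere theorem, equivariant loop theorem) tends to break down.
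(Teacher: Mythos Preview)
The paper does not prove this statement. It is quoted with a citation to Laudenbach \cite{Lau73} and then invoked as a black box (for instance, inside the proof of Theorem~\ref{thm1}). There is therefore no ``paper's own proof'' to compare your attempt against.

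On the merits of your sketch: the inessential case via Palais is fine. In the essential case, however, normalizing each $S_i$ separately against a prime-decomposition sphere system $\Sigma$ by innermost-disc surgery is routine, but it does not by itself yield the conclusion. The equality $[S_0]=[S_1]$ in the $\ZZ[\pi_1(M)]$-module $\pi_2(M)$ tells you the two normalized spheres lie in the same $\pi_1$-orbit of standard generators, not that they are ambiently isotopic; you still have to produce the isotopy, and the homotopy $H$ may interact with $\Sigma$ in a way that your disc-by-disc cleanup on the ends does not control. You acknowledge this yourself: Laudenbach's actual argument does not go through prime decomposition in this way but works directly on the homotopy, lifting to covers and performing a delicate tower/induction to strip off intersections. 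As written, your proposal is a correct orientation toward the problem but stops exactly at the hard step, so it is an outline rather than a proof.
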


\begin{proposition}\label{prop:homology-obstructs-homotopy}
	Let \(S_1,S_2\subset M\) be smoothly embedded \(2\)-spheres in a
	\(3\)-manifold \(M\). If \(S_1\) and \(S_2\) are freely homotopic, then,
	after choosing orientations,
	$
	[S_1]=\pm[S_2]
	$ in $H_2(M;\mathbb Z).
	$
	Consequently, if
	$
	[S_1]\neq \pm[S_2]
	\;\text{in }H_2(M;\mathbb Z),
	$
	then \(S_1\) and \(S_2\) are not freely homotopic.
\end{proposition}

\begin{proof}
	Suppose that \(S_1\) and \(S_2\) are freely homotopic. Choose
	parameterizations
	$
	f_i\colon \s^2\to S_i\subset M,
	\; i=1,2,
	$
	and let
	$
	H\colon \s^2\times[0,1]\longrightarrow M
	$
	be a homotopy from \(f_1\) to \(f_2\).
	Give \(\s^2\times[0,1]\) the product orientation. The push-forward under
	\(H\) of a fundamental singular \(3\)-chain of
	\(S^2\times[0,1]\) is a singular \(3\)-chain in \(M\) whose boundary is
	$
	f_{2*}[\s^2]-f_{1*}[\s^2].
	$
	Hence
	$
	f_{1*}[\s^2]=f_{2*}[\s^2]
	\;\text{in }H_2(M;\mathbb Z).
	$
	The parameterizations \(f_i\) determine orientations of the embedded
	spheres \(S_i\). Reversing either orientation changes the corresponding
	homology class by a sign. Therefore, when the spheres are regarded as
	unoriented embedded submanifolds, free homotopy implies
$
	[S_1]=\pm[S_2].
	$
	The final assertion follows by contraposition.
\end{proof}

\bigskip

\subsection{Heegaard diagrams} 

\begin{definition} A \emph{Heegaard splitting} of a closed oriented 3-manifold $M$ is a decomposition of $M$ into two connected 1-handlebodies $M=H_{1}\cup _{h}H_{2}$ for some orientation reversing homeomorphism $h\colon \partial H_{1}\to \partial H_{2}$. The common boundary of both handlebodies is called the \emph{Heegaard surface} of the splitting. 
\end{definition} 

Every closed orientable $3$-manifold $M$ admits a Heegaard splitting, that can be obtained from a self-indexing Morse function $f\colon M\to \RR $ by $H_{1}=f^{-1}(\left (-\infty ,\frac{3}{2}\right ])$ and $H_{2}=f^{-1}(\left [\frac{3}{2},\infty \right ))$ and $\Sigma =\partial H_{1}=f^{-1}\left (\frac{3}{2}\right )$. 

\begin{definition}
Let $H$ be a 3-dimensional $1$-handlebody of genus $g$. A \emph{set of attaching curves for $H$} is a collection $\{\gamma _{1},\ldots ,\gamma _g\}$ of pairwise disjoint simple closed curves in $\partial H$ such that every $\gamma _i$ bounds a disk in $H$ and $\partial H -\{\gamma _{1}, \ldots ,\gamma _{g}\}$ is connected. A \emph{Heegaard diagram}, compatible with a Heegaard splitting $M=H_{1}\cup _{h}H_{2}$, is a triple $(\Sigma ,\alpha ,\beta )$, where $\Sigma $ is a closed orientable surface of genus $g$, $\alpha =\{\alpha _{1},\ldots ,\alpha _g\}$ is a set of attaching curves for $H_1$ and $\beta =\{\beta _{1},\ldots ,\beta _{g}\}$ is a set of attaching curves for $H_2$.  
\end{definition}

A Heegaard diagram $(\Sigma ,\alpha ,\beta )$ allows a reconstruction of the manifold $M$ as follows. Take a tubular neighbourhood $\Sigma \times [0,1]$, glue a $g$-tuple of $2$-handles along the curves $\alpha _{1}\times \{0\},\ldots ,\alpha _{g}\times \{0\}$ and another $g$-tuple of $2$-handles along the attaching curves $\beta _{1}\times \{1\},\ldots ,\beta _{g}\times \{1\}$. The boundary of the resulting ma\-ni\-fold consists of two spheres; by adding $3$-handles along these spheres, we obtain a closed manifold, diffeomorphic to $M$. 

\begin{figure}[h!t]
	\centering
	\begin{overpic}[scale=0.7]{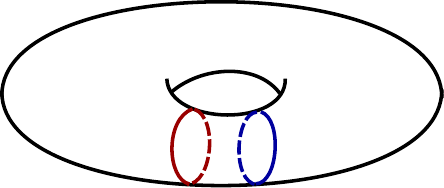}
		\put(30,  8){$\alpha _1$}
		\put(63, 7){$\beta _{1}$}
		\put(80, 20){$\Sigma $}
	\end{overpic}
	\caption{A Heegaard diagram for $\s^{2}\times \s ^{1}$. \label{heeg_diagram}}
\end{figure}

\begin{example} The $3$-manifold $\s^{2}\times \s ^{1}$ may be obtained as the union of two 1-handlebodies of genus 1: $\s^{2}\times \s ^{1}=\left (\di ^{2}\times \s ^{1}\right )\cup _{\textrm{id}}\left (\di ^{2}\times \s ^{1}\right )$. Figure \ref{heeg_diagram} shows a Heegaard diagram, compatible with this Heegaard splitting. 
\end{example}


\section{Diagrams with $3$-handles} \label{sec_diagrams}

We are interested in describing the $3$-handle attachments. Drawings of $3$-handle attaching spheres may be used to compute some $4$-manifold invariants diagrammatically (see \cite{Bar23, Mar09}). In the literature, one can find drawings of the whole system of $3$-handles in Kirby diagrams of some specific examples of $4$-manifolds \cite{HKK86, Bar23, GNS25}. In \cite{HKK86, Bar23}, the $4$-manifolds are closed and $1$-handles are given by the "$3$-ball notation". We want to present and explain a Kirby diagram with $3$-handle attaching spheres in the notation of the latest paper \cite{GNS25}.

Let $X$ be a smooth, connected, orientable compact $4$-manifold with a fixed handle decomposition. Since there is only one way to attach a $4$-handle to a boundary component of $\partial X_3$, the gluing of any $4$-handles does not need to be diagrammatically specified. The Kirby diagram depicts the attaching spheres of the $1$- and $2$-handles in $\s ^{3}$ (the boundary of a $0$-handle).

We will denote by $\mathcal{D}(X_3)$ the Kirby diagram of $X$ (i.e., a diagram of $X_3$ that only specifies the attachment of $0$-, $1$- and $2$-handles). The remaining part of the $3$-sphere after the attachment of $1$- and $2$-handles, i.e., $\s ^{3}\cap \partial X_2$, will be called the ``visible world''. We will denote by $\widehat{\mathcal{D}}(X_3)$ the same diagram, to which embedded $2$-spheres in $\partial X_2$ have been added, indicating the attaching spheres of $3$-handles. 

Of course, only the part of a $3$-handle attaching sphere that intersects the "visible world'' is drawn. This sphere has been isotoped so that we have all the information about how it goes over the $1$- or $2$-handles in the ``invisible part" (as explained in \cite{Bar23}). When a handle decomposition of $X$ is specified, $\mathcal{D}(X)$ will mean $\mathcal{D}(X_2)$ and $\widehat{\mathcal{D}}(X)$ will mean $\widehat{\mathcal{D}}(X_3)$.  


\subsection{Inessential attaching spheres}

\begin{proposition} \label{prop1} Let $X$ be a smooth, compact, connected $4$-manifold, whose handle decomposition is given by a Kirby diagram $\mathcal{D}(X_2)$. \begin{enumerate}
\item[(a)] Adding a $3$-handle to $X_2$ along an inessential sphere in $\partial X_2$ changes the boundary $B=\partial X_2$ to $B\sqcup \s ^{3}$.  
\item[(b)] Any $3$-handle, added to $X_2$ along an inessential sphere in $\partial X_2$, can be cancelled by a $4$-handle.  
\end{enumerate}
\end{proposition}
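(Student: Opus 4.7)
The plan is to deduce both parts from the surgery description of the boundary under a handle attachment recorded in the Preliminaries.

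For (a), since $\Sigma$ is inessential, the first step is to fix a 3-ball $D\subset B$ with $\partial D=\Sigma$, together with a bicollar $\Sigma\times[-1,1]\subset B$ placed so that $\Sigma\times[0,1]\subset D$ and $\Sigma\times[-1,0]\subset N:=\overline{B\setminus D}$. Attaching the 3-handle $h_3=\di^3\times\di^1$ along $\Sigma$ removes the interior of this bicollar from $B$ and glues in the two 3-balls $\di^3\times\{\pm 1\}$ along the copies $\Sigma\times\{\pm 1\}$. The argument then reduces to identifying the two resulting pieces: the half sitting inside $D$ is itself a 3-ball, so capping it off by $\di^3\times\{+1\}$ along its boundary $\s^2$ gives an $\s^3$; the other piece is diffeomorphic to $N$, and capping its boundary $\Sigma$ by $\di^3\times\{-1\}$ reconstructs $B=D\cup_\Sigma N$, because the role of the original $D$ was precisely to cap off $N$ by a 3-ball. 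Together these yield the new boundary $B\sqcup\s^3$.

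For (b), the plan is to exploit part (a). The belt sphere of a 3-handle in a 4-manifold is the 0-sphere $\{0\}\times\partial\di^1$, consisting of two points $p_+$ and $p_-$. By the construction in (a), $p_+$ lies at the center of the 3-ball $\di^3\times\{+1\}$ that caps off the inside piece of $D$, and hence lies in the newly created $\s^3$ component, while $p_-$ lies at the center of $\di^3\times\{-1\}$, and hence in the $B$ component. Attaching a 4-handle to the $\s^3$ component of $\partial(X_2\cup h_3)$, its attaching sphere is that entire $\s^3$, which meets $\{p_+,p_-\}$ transversely in exactly the one point $p_+$. This is precisely the cancellation criterion from Section~\ref{Preliminaries}, so this 4-handle cancels $h_3$.

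The main delicacy is the bookkeeping on the two sides of $\Sigma$: one must verify that $\Sigma$ separates $B$ into $D$ and $N$ (immediate since $D$ is a 3-ball with $\Sigma=\partial D$), and that capping $N$ along $\Sigma$ by a 3-ball recovers $B$ up to diffeomorphism (this uses only that the original $D$ is itself a 3-ball, so it may be replaced by $\di^3\times\{-1\}$ without changing the diffeomorphism type of the resulting closed 3-manifold). Beyond these elementary identifications, the argument is purely local and I do not anticipate any further obstacle.
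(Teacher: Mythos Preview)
Your proof is correct and follows essentially the same approach as the paper's: both arguments use that the inessential sphere bounds a $3$-ball, describe the boundary surgery as removing a collar of $\Sigma$ and capping the two sides with $3$-balls to obtain $\s^3$ and $B$, and then observe for (b) that the belt $0$-sphere has exactly one point in the new $\s^3$ component, so a $4$-handle attached there cancels the $3$-handle. The only cosmetic difference is that the paper first isotopes $\mathcal{S}$ into the visible world of the Kirby diagram, which is a diagrammatic convenience rather than a logical ingredient.
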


\begin{figure}[h!t]
	\centering
	\begin{overpic}[scale=0.24]{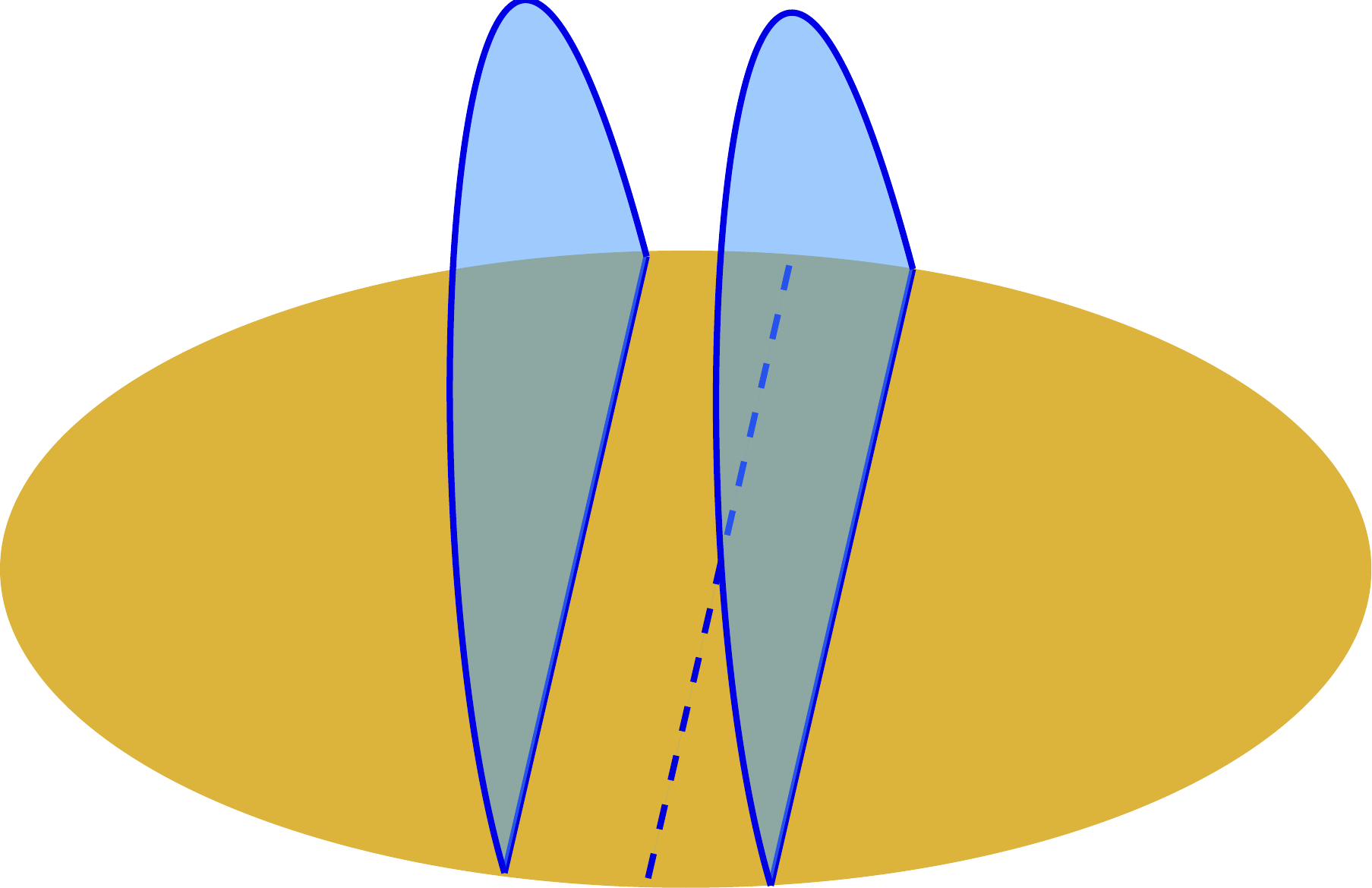}
		\put(44,  8){$\mathcal{S}$}
		\put(18, 20){$\di_{a}^{3}$}
		\put(35, 49){$\di_{b}^{3}$}
		\put(56, 49){$\di_{c}^{3}$}
		\put(68, 20){$\partial X_{2}$}
	\end{overpic}
	\caption{Adding a $3$-handle along an inessential sphere. \label{Figure_nonessential}}
\end{figure}

\begin{proof} \begin{enumerate}
\item[(a)] Let $\mathcal{S}$ be an inessential sphere in $\partial X_2$. Then we may assume $\mathcal{S}$ lies in the boundary of the $0$-handle and is contained in the visible world. Since $\mathcal{S}$ bounds a $3$-ball, the complement of its tubular neighborhood in $\partial X_2$ consists of two connected components $\di _{a}^{3}$ and $\partial X_{2}\backslash (\di _{a}^3)$. When attaching a $3$-handle along $\mathcal{S}$, a tubular neighborhood of $\mathcal{S}$ disappears from $\partial X_2$, while the remaining boundary $\di ^{3}\times \s ^0$ consisting of two balls $\di _{b}^{3}$ and $\di _{c}^{3}$ is added (see Figure \ref{Figure_nonessential}); each ball to one of the components of the remaining $\partial X_2$. The new boundary of the $4$-manifold has two connected components: $\di _{a}^{3}\cup \di _{b}^{3}\cong \s ^{3}$ and $\di _{c}^{3}\cup \partial X_{2}\backslash (\di _{a}^{3})\cong \partial X_{2}$. 
\item[(b)]  We have shown in (a) that by adding a $3$-handle to $X_2$ along an inessential sphere in $\partial X_2$, we obtain a $4$-manifold one of whose boundary components is the $3$-sphere $\di _{a}^{3}\cup \di _{b}^{3}$. The belt sphere of the added $3$-handle consists of two points: the centers of the balls $\di _{b}^{3}$ and $\di _{c}^{3}$. Since the $3$-sphere $\di _{a}^{3}\cup \di _{b}^{3}$ is intersecting the belt sphere exactly at one point, a $4$-handle attached to this $3$-sphere cancels the $3$-handle, and we are left with a manifold diffeomorphic to $X_2$.
\end{enumerate}
\end{proof}

When an attaching sphere of a $3$-handle is inessential, we may draw it in $\widehat{\mathcal{D}}(X_3)$ so that it actually contains the whole ``nontrivial'' part of the Kirby diagram $\mathcal{D}(X_3)$ (the whole link of dotted and framed circles), so the $3$-ball this attaching sphere bounds is ``outside''. Proposition \ref{prop1} implies that there is a move erasing any such sphere from the Kirby diagram. 

\begin{corollary}\label{coro1} Let $X$ be a smooth, compact, connected $4$-manifold with a fixed handle decomposition. If $\partial X_{3}$ is connected, then no $3$-handle in $X_3$ has an inessential attaching sphere. 
\end{corollary}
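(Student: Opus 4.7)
The plan is to argue the contrapositive: assuming some 3-handle $h$ of the decomposition has an inessential attaching sphere in $\partial X_2$, I will show that $\partial X_3$ must be disconnected. The first step is to invoke the freedom to reorder handles of the same index and arrange for $h$ to be attached first among the 3-handles. Then Proposition \ref{prop1}(a) applies directly to $X_2\cup h$ and tells me that $\partial(X_2\cup h)$ is already disconnected, namely of the form $\partial X_2 \sqcup \s^{3}$, where the new $\s^{3}$-component is $\di_a^3\cup\di_b^3$ from that proof.

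The remaining task is to show that the subsequent 3-handle attachments cannot re-merge boundary components. My approach is to note that the attaching sphere of a 3-handle is a connected $2$-sphere, and therefore it lies in a single connected component $C$ of the current boundary. The associated surgery is purely local to $C$: it removes a tubular neighborhood $\s^2\times(-\epsilon,\epsilon)$ of the sphere from $C$ and caps off the two resulting boundary spheres with 3-balls. This either leaves $C$ as a single component (if the attaching sphere is nonseparating in $C$) or splits $C$ into two (if it is separating), but in neither case can it join $C$ to another boundary component. Hence the number of boundary components is nondecreasing under 3-handle attachment, and starting from the already disconnected $\partial(X_2\cup h)$ we conclude that $\partial X_3$ is disconnected.

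I do not anticipate a serious obstacle: the conclusion is essentially a formal consequence of Proposition \ref{prop1}(a) combined with the local nature of 3-handle surgery. The one point that deserves explicit mention in the write-up is exactly this locality, namely that a 3-handle attaching region sits in a single boundary component and thus cannot bridge two different components.
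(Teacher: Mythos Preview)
Your proof is correct and follows the same approach as the paper, whose proof is the single line ``This follows directly from Proposition~\ref{prop1}(a).'' Your explicit treatment of the remaining $3$-handles---reordering so that the inessential one is attached first, then observing that a $3$-handle surgery is local to one boundary component and hence cannot merge components---spells out a detail the paper leaves implicit.
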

\begin{proof} This follows directly from Proposition \ref{prop1} (a). 
\end{proof}

To describe the attachment of a $3$-handle in the 3-dimensional context, we may use a Heegaard diagram of $\partial X_2$. Recall that a Heegaard diagram of a 3-manifold $M$ describes a handle decomposition of $M$ based on a self-indexing Morse function $f\colon M\to \RR $. Using the gradient field of $f$, an inessential sphere $\mathcal{S}\subset \partial X_2$ may be pushed onto the boundary of the $0$-handle. Figure \ref{heeg_diagram1} shows such an inessential attaching sphere on two Heegaard diagrams for the 3-sphere $\s ^3$. In general, any Heegaard diagram of a closed 3-manifold $M$ may be stabilized to present $M\,\# \,\s ^{3}\cong M$ and an inessential attaching sphere may be pushed into the $\s ^3$ summand in the same manner.    

\begin{figure}[h!t]
	\centering
	\begin{overpic}[scale=0.7]{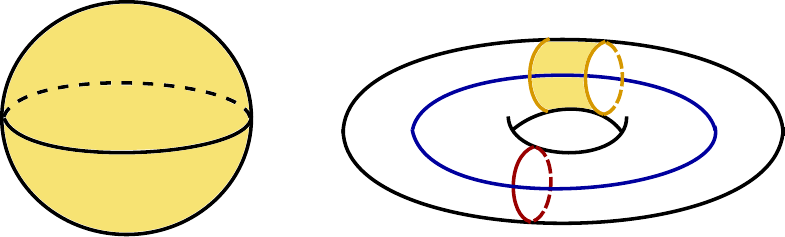}
		\put(60,  9){$\alpha _1$}
		\put(92, 12){$\beta _{1}$}
		\put(96, 21){$\Sigma _{1}$}
                     \put(20, 22){$\Sigma _{0}$}
	\end{overpic}
	\caption{An inessential attaching sphere (yellow) on a Heegaard diagram for $\s ^3$ of genus 0 (left) and of genus 1 (right). Observe that the attaching sphere is contained in one handlebody of the Heegaard splitting. \label{heeg_diagram1}}
\end{figure}


\subsection{Adding $3$-handles to a $1$-handlebody}

The attachment of $k$ $1$-handles to a $0$-handle results in a $1$-handlebody $\natural _{k}\;\s ^{1}\times \di ^{3}$ with boundary $\#_{k}\;\s^{1}\times \s^{2}$. When $k=0$, we let $\#_{0}\;\s^{1}\times \s^{2}$ denote $\s ^3$. Let us discuss possible attachments of $3$-handles to the boundary of this handlebody. 

\begin{proposition} \label{prop3} Consider the $1$-handlebody $X_{1}=\s ^{1}\times \di ^{3}$ with its usual handle decomposition. There are exactly two distinct ways to attach a $3$-handle to $\partial X_1$: \begin{enumerate}
\item along an inessential attaching sphere,
\item along an essential attaching sphere that is nontrivially linked with the attaching sphere of the $1$-handle. 
\end{enumerate} In both cases, we may assume the attaching sphere of the $3$-handle is contained in the visible world of the Kirby diagram (in the "$3$-ball notation"). By adding a $4$-handle to $\partial X_3$, we obtain a manifold $X_{4}\cong X_1$ in case (1) and a closed manifold $X_{4}\cong \s ^{1}\times \s ^{3}$ in case (2). 
\end{proposition}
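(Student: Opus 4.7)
I would split the proof into three steps: classify embedded $2$-spheres in $\partial X_{1}=\s^{1}\times \s^{2}$ up to isotopy, verify the visible-world realization with the stated linking description, and identify the closed $4$-manifold $X_{4}$ in each case. For the classification, I would first apply Laudenbach's theorem (Section \ref{4-manifolds}) to reduce the isotopy classification of embedded $2$-spheres in $\s^{1}\times \s^{2}$ to the corresponding homotopy classification. Via the Hurewicz map, $\pi_{2}(\s^{1}\times \s^{2})\cong H_{2}(\s^{1}\times \s^{2};\ZZ)\cong \ZZ$, generated by $[\{pt\}\times \s^{2}]$. Any null-homotopic embedded sphere bounds a $\di^{3}$ and is inessential, isotopic to a small standard sphere. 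Using that $\s^{1}\times \s^{2}$ is a prime $3$-manifold, an essential embedded sphere must be non-separating (a separating essential sphere would produce a nontrivial prime decomposition) and is isotopic to the standard $\{pt\}\times \s^{2}$. This yields exactly two isotopy classes, matching (1) and (2).

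For the visible-world and linking description: in the $3$-ball notation, the $1$-handle is drawn as two $3$-balls $B^{+},B^{-}\subset \s^{3}=\partial \di^{4}$, and the visible world is $\s^{3}\setminus \mathrm{int}(B^{+}\cup B^{-})$. Any sphere $\Sigma$ in the visible world either separates $B^{+}$ from $B^{-}$ in $\s^{3}$---in which case, after the $1$-handle gluing, $\Sigma$ is essential in $\s^{1}\times \s^{2}$---or it does not, in which case it bounds a $3$-ball disjoint from $B^{\pm}$ and descends to an inessential sphere. Hence an inessential representative can be drawn as a small sphere inside the visible world, while an essential representative can be realized as a $2$-sphere separating $B^{+}$ from $B^{-}$, which is precisely the condition of nontrivial linking with the attaching $\s^{0}$ of the $1$-handle.

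To identify $X_{4}$: in case (1), Proposition \ref{prop1}(b) applies directly, so the $3$-handle cancels with a $4$-handle and $X_{4}\cong X_{1}=\s^{1}\times \di^{3}$. In case (2), I would first compute $\partial X_{3}$ by the surgery formula: cutting $\s^{1}\times \s^{2}$ along the non-separating essential sphere yields $I\times \s^{2}$, and capping each boundary component with a $\di^{3}$ produces $\s^{3}$. A single $4$-handle then fills this in. To identify the resulting closed manifold, I would use the standard decomposition $\s^{1}\times \s^{3}=(\s^{1}\times \di^{3})\cup_{\s^{1}\times \s^{2}}(\s^{1}\times \di^{3})$: turning the second factor upside down converts it into exactly one $3$-handle and one $4$-handle attached to $X_{1}$ along the essential $\{pt\}\times \s^{2}$. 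By the Laudenbach-Poenaru theorem \cite{LP72} (applicable since $\partial X_{2}=\#_{1}(\s^{1}\times \s^{2})$), these $3$- and $4$-handles attach uniquely, so $X_{4}\cong \s^{1}\times \s^{3}$.

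The hardest step will likely be the rigorous justification that every essential embedded $2$-sphere in $\s^{1}\times \s^{2}$ is isotopic to the standard $\{pt\}\times \s^{2}$. While classical, this is nontrivial; it may require invoking the prime decomposition theorem to rule out separating essential spheres, combined with a standard cut-and-paste argument for the uniqueness of non-separating spheres (or alternatively, Hatcher's sphere complex techniques).
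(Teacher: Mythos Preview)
Your proposal is correct and covers all the required points, but it is organized differently from the paper's proof. The paper works in the visible world from the outset: it uses the gradient flow of a Morse function to isotope the attaching sphere $\mathcal{S}$ off the $1$-handle and into $M=\partial X_{0}\cap \partial X_{1}\cong \di^{1}\times \s^{2}$, then applies Lemma~\ref{lemma2} (since $M$ is simply connected) to conclude that $\mathcal{S}$ separates $M$, and distinguishes the two cases by whether the two boundary spheres of $M$ land in the same or different components. You instead classify embedded spheres in $\s^{1}\times \s^{2}$ globally via Laudenbach's theorem and primeness, and only afterwards realize representatives in the visible world. For the identification $X_{4}\cong \s^{1}\times \s^{3}$ in case~(2), the paper carries out an explicit handle computation ($X_{3}=\di^{1}\times \s^{3}\cup_{\partial \di^{1}\times \di^{3}}\di^{1}\times \di^{3}$ with $\partial X_{3}\cong \s^{3}$), whereas you invoke the double decomposition of $\s^{1}\times \s^{3}$ together with Laudenbach--Po\'enaru. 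Your route is slightly more conceptual and front-loads the $3$-manifold input you flag as the ``hardest step''; the paper's route is more diagrammatic and self-contained within the Kirby-calculus framework being developed, though it too implicitly uses that a separating sphere in $\di^{1}\times \s^{2}$ is standard. One minor caution: your appeal to the Hurewicz map for $\pi_{2}(\s^{1}\times \s^{2})\cong \ZZ$ should really go through the universal cover $\mathbb{R}\times \s^{2}$, since the space is not simply connected.
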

\begin{proof}
The standard Kirby diagram $\mathcal{D}(\s ^{1}\times \di ^{3})$ describes an attachment of a single $1$-handle $h_1$ to the boundary of the $0$-handle. In the ``$3$-ball notation'', the diagram contains two $3$-balls denoting the attaching region of the $1$-handle. Let $\mathcal{S}$ be an attaching sphere of a $3$-handle in $\partial X_1$. Denote by $f$ a Morse function inducing the handle decomposition of $X_1$. If necessary, we use the flow of the vector field $-\nabla f$ to pull $\mathcal{S}$ off the $1$-handle and into the visible world. Thus, we may assume $\mathcal{S}$ is contained in the Kirby diagram. Let $M=\partial X_{0}\cap \partial X_{1}\subset \s ^{3}$ be the part of the visible world in which $\mathcal{S}$ is embedded. Since $M\cong \di ^{1}\times \s ^{2}$ is simply connected, the sphere $\mathcal{S}$ separates $M$ into two connected components by Lemma \ref{lemma2}. There are two possibilities of the attaching sphere:
\begin{enumerate}
\item If both boundary components of $M$ lie in the same connected component of $M\backslash \mathcal{S}$, then the $3$-handle, attached along $\mathcal{S}$, may be slid off the $1$-handle. Thus $\mathcal{S}$ is inessential and Proposition \ref{prop1} (b) implies it may be cancelled by a $4$-handle.
\item If the boundary components of $M$ lie in distinct components of $M\backslash \mathcal{S}$, then $\mathcal{S}$ represents the nontrivial homology class $[\mathcal{S}]\in H_{2}(\s ^{3}\backslash \partial _{a}h_{1};\ZZ _{2})\cong H_{2}(M;\ZZ _{2})\cong \ZZ _{2}$. 
\end{enumerate}

Let us look more closely at the second case (see Figure \ref{essential_sphere}). In our handle decomposition, the $0$-handle may be represented as $h_{0}=\di ^{1}\times \di ^{3}$, to which the $1$-handle $h_{1}=\di ^{1}\times \di ^{3}$ is added along $\partial \di ^{1}\times \di ^{3}$, creating $X_{1}=\s ^{1}\times \di ^{3}$. The visible part of the boundary $\partial X_{1}$ in the Kirby diagram is $\di ^{1}\times \s ^{2}\subset \partial h_{0}$. After the attachment of our $3$-handle along the attaching sphere $\{0\}\times \s ^{2}$, we obtain the manifold $X_{3}=\di ^{1}\times \s ^{3}\cup _{\partial \di ^{1}\times \di ^{3}} \di ^{1}\times \di ^{3}$ with boundary $\s ^{0}\times \di ^{3}\cup _{\s ^{0}\times \s ^{2}}\di ^{1}\times \s ^{2}\cong \s ^{3}$. Adding a $4$-handle along this $3$-sphere, we obtain the closed $4$-manifold $\s ^{1}\times \s ^{3}$. 
\end{proof}
 
\begin{figure}[h!t]
	\centering
	\begin{overpic}[scale=0.17]{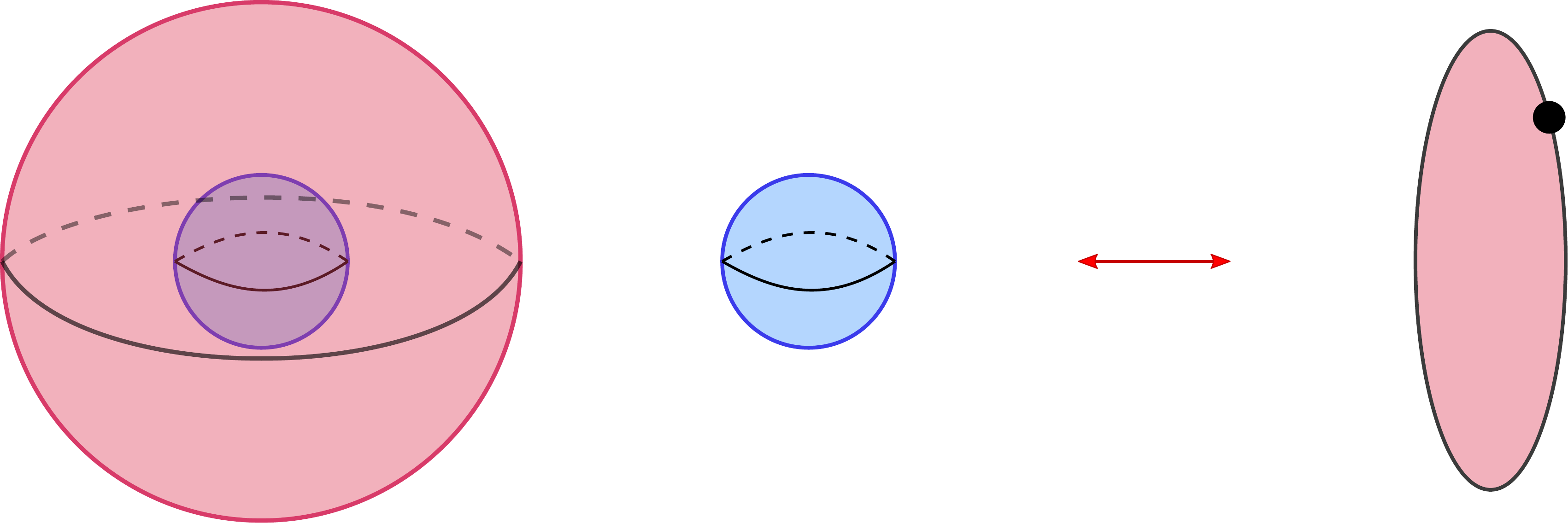}
	\end{overpic}
	\caption{Adding a $3$-handle to $\s ^{1}\times \di ^{3}$ along an essential sphere. \label{essential_sphere}}
\end{figure}

In the ``dotted circle notation'', the $1$-handle of
$\mathcal{D}(\s^1\times\di^3)$ is represented by a dotted unknot
$K\subset\s^3$. Let $\di_a^2\subset\s^3$ be a spanning disk for $K$.
Push the interior of $\di_a^2$ slightly into $\di^4$, while keeping its
boundary fixed, to obtain a properly embedded disk
$
\di_b^2\subset\di^4,\;
\partial\di_b^2=K.
$
The attachment of the $1$-handle may equivalently be described by
carving out an open tubular neighborhood
$
\nu(\di_b^2)\cong \di_b^2\times\di^2
$
from the $0$-handle. Thus
$
\s^1\times\di^3
\cong
\di^4\setminus\operatorname{Int}\nu(\di_b^2),
$
and its boundary decomposes as
\[
\partial(\s^1\times\di^3)
=
\Bigl(
\s^3\setminus\operatorname{Int}\nu(K)
\Bigr)
\cup_{\,K\times\s^1}
\Bigl(
\di_b^2\times\s^1
\Bigr),
\]
where
$
\nu(K)\cong K\times\di^2
$
and the common boundary is the torus
$K\times\s^1$.

Choose a point $\ast\in\s^1$ corresponding to the normal direction
determined by the spanning disk $\di_a^2$, and let
\[
\di_{\mathrm{vis}}^2
=
\operatorname{cl}
\Bigl(
\di_a^2\setminus\nu(K)
\Bigr)
\subset
\s^3\setminus\operatorname{Int}\nu(K).
\]
Then $\di_{\mathrm{vis}}^2$ is a properly embedded disk whose boundary
is the longitude
\[
\partial\di_{\mathrm{vis}}^2
=
K\times\{\ast\}
=
\partial(\di_b^2\times\{\ast\}).
\]
Consequently,
$
\mathcal{S}
=
\di_{\mathrm{vis}}^2
\cup_{\,K\times\{\ast\}}
\bigl(\di_b^2\times\{\ast\}\bigr)
\subset
\partial(\s^1\times\di^3)
$
is an embedded $2$-sphere. It is the standard essential sphere dual to
the $1$-handle. Only the disk
$\di_{\mathrm{vis}}^2$ lies in the visible part of the Kirby diagram.
For simplicity, our convention is to represent this visible disk by
shading the spanning disk $\di_a^2$ bounded by the dotted component in the diagram $\widehat{\mathcal{D}}(\s ^{1}\times \di ^{3})$;
see Figure~\ref{essential_sphere}~(right).

\begin{figure}[h!]
	\centering
	\begin{overpic}[scale=0.75]{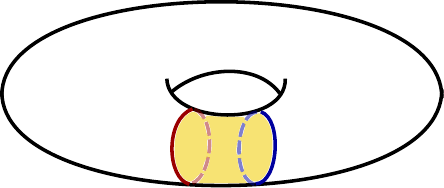}
		\put(30,  8){$\alpha _1$}
		\put(63, 6){$\beta _{1}$}
	\end{overpic}
	\caption{An essential attaching sphere (yellow) on a Heegaard diagram for $\s ^{1}\times \s ^2$. \label{heeg_diagram2}}
\end{figure}

An essential attaching sphere of a $3$-handle, added to $X_{1}=\s ^{1}\times \di ^{3}$, may also be drawn in the Heegaard diagram of $\partial X_1=\s ^{1}\times \s ^{2}$, see Figure \ref{heeg_diagram2}. The sphere $\mathcal{S}$ consists of an annulus on the Heegaard surface, bounded by the attaching circles $\alpha _1$ and $\beta _1$, of the disk that $\alpha _1$ bounds in the handlebody $H_1$ and of the disk that $\beta _1$ bounds in the handlebody $H_2$.


\subsection{Interactions with a $2$-handle}

Given a handle decomposition of a $4$-manifold $X$, an attaching sphere $\mathcal{S}$ of a $3$-handle is embedded in the boundary $3$-manifold $\partial X_2$. Let us discuss possible interactions of the sphere $\mathcal{S}$ with a previously attached $2$-handle $h_2$. There are two regions along which parts of $\mathcal{S}$ may be embedded: \begin{enumerate}
\item a meridional disc $\di _{r}$ in the remaining boundary $\partial _{r}h_2$ that runs parallel to the core of $h_2$.
\item the toroidal boundary $\mathbb{T}_{a}$ of the attaching region $\partial _{a}h_2$,
\end{enumerate}

If the attaching sphere $\mathcal{S}$ intersects the belt sphere of $h_2$ transversely at one point (and thus contains one parallel copy of the disc $\di _{r}$), the $2$- and $3$-handle form a cancelling pair and may thus be removed from the handle decomposition. 

When the $3$-handle is slid over the $2$-handle $h_2$, the attaching sphere $\mathcal{S}$ obtains two parallel copies of the disc $\di _{r}$. This situation is shown in the Figure \ref{attaching_links2} (right). In this case, two copies of the attaching circle of $h_2$ (more precisely, two parallel copies of its longitude on the boundary of the attaching region) bound the visible part of $\mathcal{S}$ in the Kirby diagram.

A visible part of the attaching sphere $\mathcal{S}$ may be nontrivially linked with an attaching circle of a $2$-handle $h_2$. In this case, $\mathcal{S}$ contains an annular region of $\mathbb{T}_a$, see Figure \ref{attaching_links2} (left).

\begin{figure}[h!t]
	\centering
	\begin{overpic}[scale=0.26]{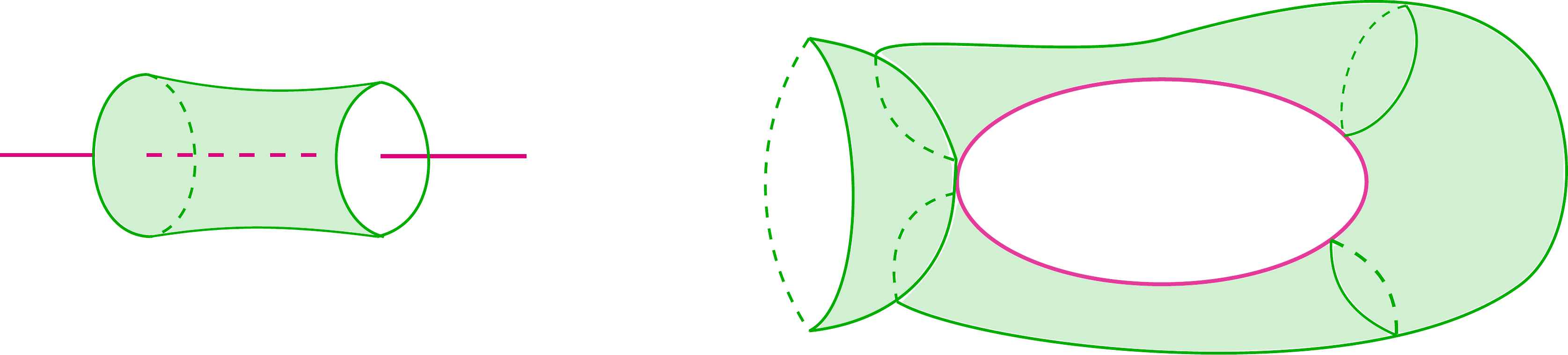}
	\end{overpic}
	\caption{Interactions of a $3$-handle attaching sphere with a previously attached $2$-handle. \label{attaching_links2}}
\end{figure}


\begin{figure}[h!t]
	\begin{center}
		\begin{overpic}[scale=0.66]{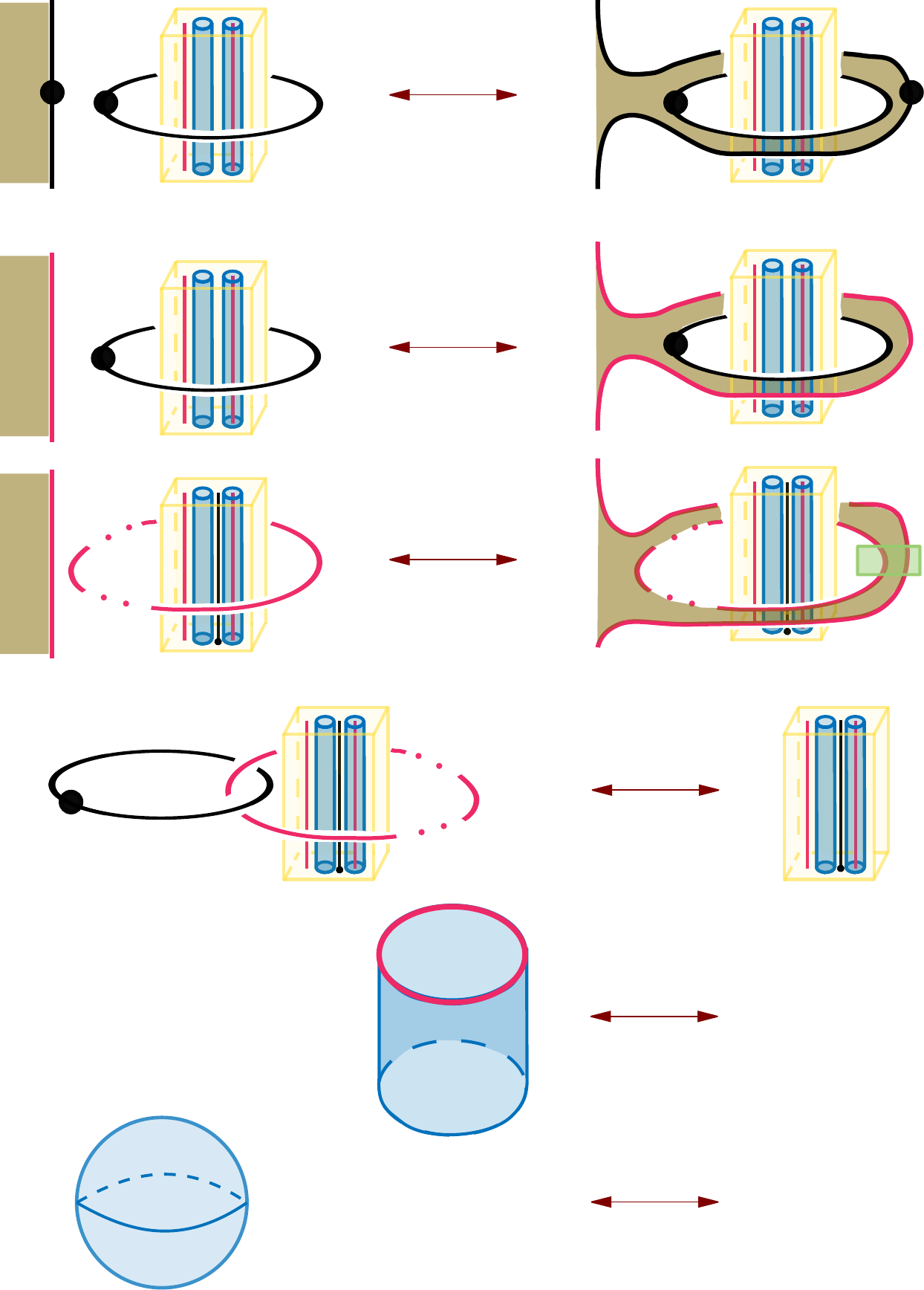}
			\put(30, 95){$(1-1)$ slide}
			\put(30, 75){$(2-1)$ slide}
			\put(30, 59){$(2-2)$ slide}
			\put(45, 41){$(1-2)$ canc.}
			\put(41, 27){$0$}
			\put(45, 23){$(2-3)$ canc.}
			\put(60, 21){$\emptyset$ empty}
			\put(21, 6){$\cup$ cancelling $4$-handle}
			\put(45, 9){$(3-4)$ canc.}
			\put(60, 6){$\emptyset$ empty}
		\end{overpic}
		\caption{Moves on diagrams. Part I.\label{h3moves1}}
	\end{center}
\end{figure}

\begin{figure}[h!t]
	\begin{center}
		\begin{overpic}[scale=0.66]{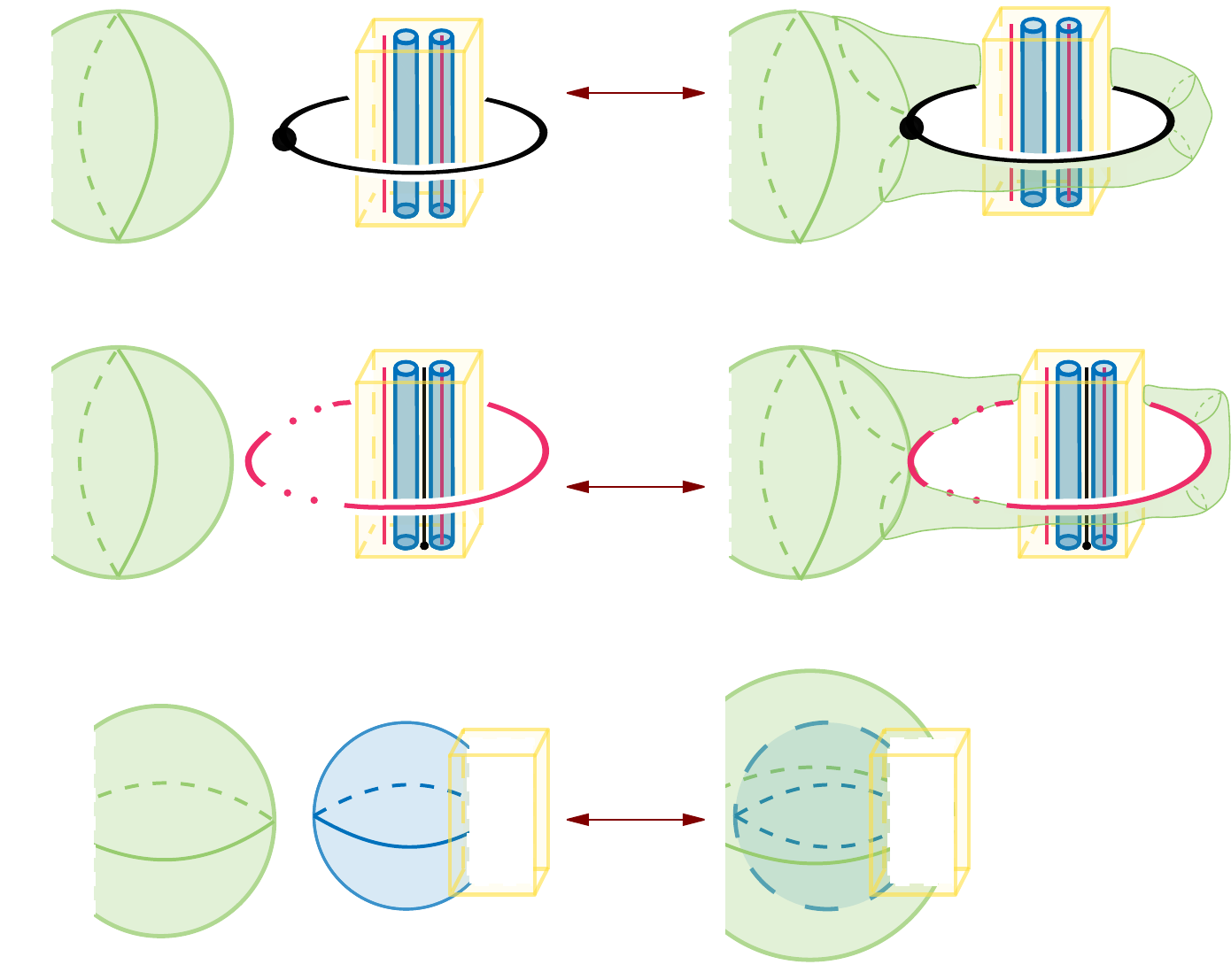}
			\put(43, 75){$(3-1)$ slide}
			\put(43, 48){$(3-2)$ slide}
			\put(43, 23){$(3-3)$ slide}
		\end{overpic}
		\caption{Moves on diagrams. Part II.\label{h3moves2}}
	\end{center}
\end{figure}

\subsection{Moves on diagrams with $3$-handle information} \label{sub_moves}

We present the moves on diagrams with $3$-handle information added in Figures \ref{h3moves1}--\ref{h3moves2}. These moves include the standard Kirby calculus (i.e., drawing only $1$- and $2$-handle attaching information) together with attaching spheres for the $3$-handle attachment, and their interactions that agree with Theorem \ref{thCer}.

Our extended set of moves differs in two substantive ways from Barenz's presentation \cite{Bar23}. First, we work in the dotted circle (Akbulut) notation for $1$-handles rather than the "$3$-ball notation". Second, several of our moves are stated in greater generality: for instance, in the $(2-3)$-handle cancellation, we do not require the visible part of the $3$-handle's attaching sphere to be a disc.

The labels of the moves are above the arrows and refer to the move from left to right. The arrows are drawn in both directions, indicating that the move from right to left is also allowed. The attaching circles of $2$-handles (drawn in magenta) may have an arbitrary integer framing. If only a partial arc of the circle is drawn, the rest of the circle may be arbitrarily knotted. 

Inside the yellow box, there may be an arbitrary combination of $2$- and $3$-handle attaching spheres, except for the situation where the dotted circle is nontrivially linked with another dotted circle, which is not allowed (\cite{GS99}). Moreover, the yellow box in the last picture of Figure \ref{h3moves2} indicates that the attaching spheres of $3$-handles may be bounded by framed or dotted circles that the box is passing through. The brown/olive attaching sphere in Figure \ref{h3moves1} may or may not be present.

In the case of a $(2-2)$-handle slide of a handle $h_1$ (left) over the handle $h_2$ (right), the framing of $h_2$ does not change and the framing of $h_1$ changes to 
$\text{frame}(h_1)\mapsto \text{frame}(h_1)+\text{frame}(h_2) \pm 2\text{lk}(h_1,h_2).$
The green box in Figure \ref{h3moves1} symbolizes the change in the linking between the two curves: the new linking number locally changes by $\text{frame}(h_2)$. 
In the case of a $(2-1)$-handle slide of a $2$-handle $h_1$ (left) over a $1$-handle $h_2$ (right), the framing of $h_1$ changes to $\text{frame}(h_1)\mapsto \text{frame}(h_1) \pm 2\text{lk}(h_1,h_2).$ See \cite[sec. 5.1]{GS99} for the details.


\section{Uniqueness of $3$-handle attachments} \label{sec_Uniqueness}

We know that any closed connected smooth $4$-manifold admits a handle decomposition with only one $4$-handle. In such a handle decomposition, every $3$-handle must attach along an essential sphere by Corollary \ref{coro1}. We have the following.

\begin{proposition}\cite[\S 4]{HKK86}\label{prop2}
Let $W$ be a closed $4$-manifold, whose handle decomposition has one $4$-handle and $\partial W_2=\#_k\;\s^{1}\times \s^{2}$. Then $W$ has exactly $k$ $3$-handles. A $k$-tuple of pairwise disjoint embedded $2$-spheres $\Sigma_1, \ldots, \Sigma_k \subset\partial W_2$ may represent the attaching spheres of these $3$-handles if and only if $[\Sigma_1]$, \ldots, $[\Sigma_k]$ form a basis of $H_2(\partial W_2)\cong\mathbb{Z}^k$. 
\end{proposition}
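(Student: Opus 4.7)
The plan is to reduce to Proposition \ref{prop:handlebody} via handle duality. Let $U = \overline{W\setminus W_2}$ be the upper portion of $W$, built from the $3$-handles and the unique $4$-handle. Reversing the Morse function that induces the decomposition presents $U$ as a $1$-handlebody consisting of a dual $0$-handle (the original $4$-handle) together with $m$ dual $1$-handles (the original $3$-handles). Since $4$-dimensional $1$-handles admit a unique framing in the orientable setting, $U\cong\natural_m\,\s^1\times\di^3$. Comparing
\[
\#_m\,\s^1\times\s^2\;=\;\partial U\;=\;\partial W_2\;=\;\#_k\,\s^1\times\s^2
\]
via first Betti numbers forces $m=k$, yielding the counting assertion.

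Under this duality, the attaching spheres of the $3$-handles of $W$ in $\partial W_2$ coincide with the belt spheres of the dual $1$-handles of $U$. For the forward direction of the basis assertion, the long exact sequence of $(U,\partial U)$ together with $H_2(U;\mathbb{Z})=H_3(U;\mathbb{Z})=0$ gives a boundary isomorphism
\[
H_3(U,\partial U;\mathbb{Z})\;\xrightarrow{\;\cong\;}\;H_2(\partial U;\mathbb{Z})\cong\mathbb{Z}^k
\]
sending cocores of the dual $1$-handles to their belt spheres, so the latter form a $\mathbb{Z}$-basis.

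For the reverse direction, let $S_1,\dots,S_k$ denote the original $3$-handle attaching spheres, which form a basis by what we just showed. Given a $k$-tuple $\Sigma_1,\dots,\Sigma_k$ of disjoint embedded spheres in $\partial W_2$ whose classes also form a $\mathbb{Z}$-basis, I would construct a self-diffeomorphism $\phi\colon\partial W_2\to\partial W_2$ with $\phi(S_i)=\Sigma_i$ as follows. The change-of-basis matrix in $GL_k(\mathbb{Z})$ factors into elementary matrices -- transvections, permutations, diagonal sign changes -- each realizable geometrically on $\#_k\,\s^1\times\s^2$ by handle slides, summand swaps, and orientation-reversing self-diffeomorphisms of a single $\s^1\times\s^2$ summand. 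This produces a diffeomorphism with the correct action on $H_2$; Laudenbach's sphere theorem, applied simultaneously to the sphere system, then upgrades the homological equality $[\phi(S_i)]=[\Sigma_i]$ to an ambient isotopy sending each $\phi(S_i)$ to $\Sigma_i$. Finally, \cite{LP72} extends $\phi$ to a self-diffeomorphism of $U=\natural_k\,\s^1\times\di^3$, so altering the gluing of $U$ to $W_2$ by $\phi$ preserves the diffeomorphism type of $W$; hence $\Sigma_1,\dots,\Sigma_k$ do arise as the $3$-handle attaching spheres of a valid handle decomposition of $W$.

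The principal technical obstacle is the simultaneous isotopy step: Laudenbach's theorem as stated concerns a single embedded sphere, whereas we need an ambient isotopy carrying an entire system of disjoint spheres onto another. This requires iterating the sphere theorem via innermost-disc arguments to remove excess intersections while preserving disjointness of the system, in the spirit of Hatcher's sphere-system techniques.
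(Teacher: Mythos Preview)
The paper does not supply its own proof of this proposition: it is stated with a citation to \cite[\S 4]{HKK86} and used as input. The closest arguments actually carried out in the paper are the proofs of Proposition~\ref{prop:handlebody} (the special case $W_2=\natural_k\,\s^1\times\di^3$) and of Theorem~\ref{thm1} (the generalization to non-closed $W$), both of which proceed by direct geometric manipulation of spheres---innermost-disc arguments against the prime-decomposition spheres, followed by $(3{-}1)$-handle slides---rather than by duality and an appeal to \cite{LP72}. So your overall route is genuinely different from what the paper does in the neighboring results.

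Your duality reduction is natural, and the counting statement and the forward direction are fine; the long exact sequence argument is exactly what the paper uses in Proposition~\ref{prop:handlebody}. The reverse direction, however, has a gap that is slightly different from the one you flag. Laudenbach's theorem \cite{Lau73} upgrades \emph{homotopy} of embedded spheres to isotopy, not homology. Your diffeomorphism $\phi$ only arranges $[\phi(S_i)]=[\Sigma_i]$ in $H_2$, and in $\#_k\,\s^1\times\s^2$ homologous embedded spheres need not be freely homotopic a priori: $\pi_2$ is a large $\ZZ[F_k]$-module, and already the separating/nonseparating distinction shows that homology alone is too coarse to pin down an isotopy class. So the obstacle is not merely the \emph{simultaneity} of the isotopy; it is the passage from homology to homotopy for each sphere individually, before Laudenbach can even be invoked.

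This gap is fillable, and the paper's proofs of Proposition~\ref{prop:handlebody} and Theorem~\ref{thm1} supply exactly the missing ingredient: one shows directly, via innermost-disc compressions and handle slides, that any embedded sphere in $\#_k\,\s^1\times\s^2$ representing a given nonzero class is isotopic to a standard connected sum $s_{i_1}\#\cdots\#s_{i_r}$. Once that normal form is available your change-of-basis-plus-\cite{LP72} strategy goes through cleanly. Alternatively one could try to bypass the isotopy entirely by checking that surgery on any disjoint basis system $\Sigma_1,\dots,\Sigma_k$ yields $\s^3$ and then quoting \cite{LP72} directly---but verifying that surgery statement again requires the same geometric normal-form input.
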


\begin{example}[An extended Kirby diagram for $\mathbb{T}^2\times \s^{2}$]
We present a Kirby diagram with $3$-handles of the closed manifold $\mathbb{T}^2\times \s^{2}$. Such a diagram has been given in \cite{Bar23}, where the "$3$-ball notation" of $1$-handles was used. We redraw the diagram with the dotted circle notation of $1$-handles. A Kirby diagram of $\mathcal{D}(\mathbb{T}^2\times \s^2)$ is given in Figure \ref{double_T2D2}. Extending this diagram by the attaching spheres of two $3$-handles, we obtain $\widehat{\mathcal{D}}(\mathbb{T}^2\times \s^2)$, presented in Figure \ref{3handles_exe1}.

Observe that the attaching spheres $\partial_{a}h_{3A}$ and $\partial_{a}h_{3B}$ are embedded in $\partial(\mathbb{T}^2\times \s^2)_2$ and form a basis of $H_2(\partial(\mathbb{T}^2\times \s^2)_2)$, because each of them is nontrivially linked with the attaching sphere of one $1$-handle (indicated by a dotted circle); $ \partial(\mathbb{T}^2\times \s^2)_2\cong\#_2\;\s^{1}\times \s^{2}$. Therefore, the two spheres represent the attaching spheres of two $3$-handles in $(\mathbb{T}^2\times \s^2)_2$ by Proposition \ref{prop2}. 
\end{example}

\begin{figure}[h!t]
	\begin{center}
		\begin{overpic}[scale=0.3]{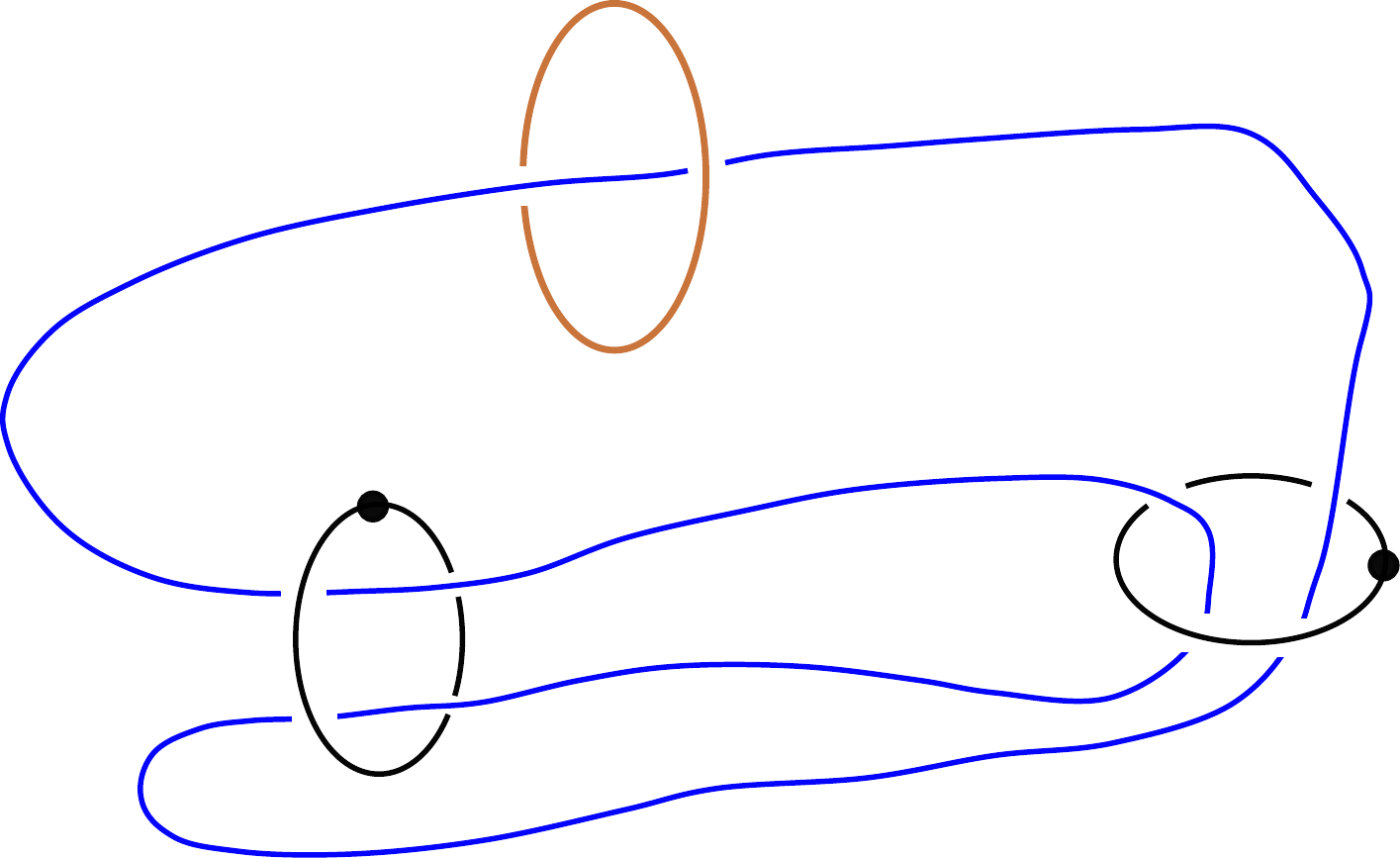}
			\put(60, 16){$0$}
			\put(52, 40){$0$}
		\end{overpic}
		\caption{A Kirby diagram $\mathcal{D}(\mathbb{T}^2\times \s^2)$.\label{double_T2D2}}
	\end{center}
\end{figure}

\begin{figure}[h!t]
	\centering
	\begin{overpic}[scale=0.58]{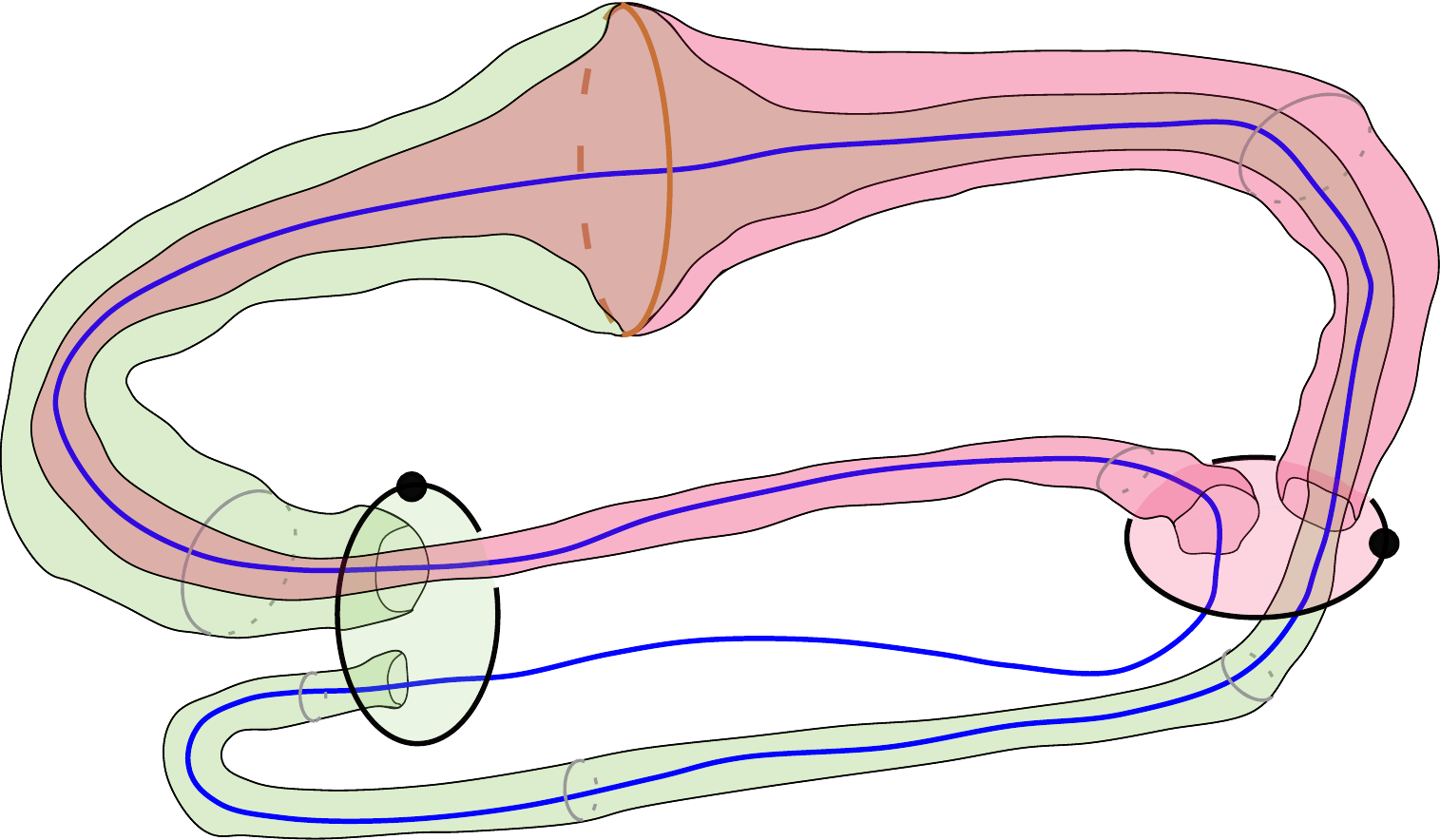}
		
		\put(60, 15){$\partial_{a}h_{2a}$}
		\put(42, 32){$\partial_{a}h_{2b}$}
		
		\put(3, 5){$\partial_{a}h_{3A}$}
		\put(62, 30){$\partial_{a}h_{3B}$}
		
	\end{overpic}
	\caption{A diagram $\widehat{\mathcal{D}}(\mathbb{T}^2\times \s^2)$. \label{3handles_exe1}}
\end{figure}


\subsection{Manifolds with a general boundary} \label{general_boundary}

For a $3$-dimensional manifold $M$, let $H_2^{\mathrm{sph}}(M)$ denote the subgroup of $H_2(M;\mathbb{Z})$, generated by embedded $2$-spheres in $M$: $$H_2^{\mathrm{sph}}(M)=\left<[\Sigma]\in H_2(M): \Sigma\text{ represented by an embedded $\s^2$ }\right>\;.$$  It is isomorphic to the image of the Hurewicz homomorphism $\pi_2(M)\to H_2(M;\mathbb{Z})$ \cite{Hat07}.  We can generalize Proposition \ref{prop2} to $4$-manifolds with a more general boundary, as follows.


\begin{theorem}\label{thm1}
	Let $W$ be a smooth compact connected $4$-manifold with non-empty
	connected boundary, equipped with a fixed handle decomposition without
	$4$-handles. Assume that the number of $3$-handles is $k$ and that
	$$
	\partial W_2 \cong M' \# \Bigl(\#_{k}(\s^1\times \s^2)\Bigr),
	$$
	where $M'$ is irreducible. Let $\Sigma_1,\dots,\Sigma_k \subset \partial W_2$ be pairwise disjoint
	smoothly embedded $2$-spheres. Then the following are equivalent:
	
	\begin{enumerate}
		\item Up to permutation and $3$--$3$ handle slides, the spheres
		$\Sigma_1,\dots,\Sigma_k$ may be isotoped to the actual attaching
		spheres of the $k$ $3$-handles of $W$.
		
		\item The complement
		$
		\partial W_2 \setminus \operatorname{Int}\nu(\Sigma_1\cup\cdots\cup\Sigma_k)
		$
		is connected.
		
		\item The classes
		$
		[\Sigma_1],\dots,[\Sigma_k]
		$
		form a $\mathbb Z$-basis of $H_2^{\mathrm{sph}}(\partial W_2)$.
	\end{enumerate}
	
	Moreover, whenever these equivalent conditions hold, simultaneous
surgery on $\Sigma_1$, $\dots,$ $\Sigma_k$ produces a closed $3$-manifold
diffeomorphic to $M'$.
\end{theorem}


\begin{definition}
	Let
	$
	\mathcal{S}=\{S_1,\ldots,S_k\}
	$
	be a collection of pairwise disjoint embedded \(2\)-spheres in a
	\(3\)-manifold \(M\). A \emph{sphere slide of \(S_i\) over \(S_j\)}
	(\(i\neq j\)) is defined as follows. Choose an embedded arc
	$
	\gamma\subset M
	$
	whose endpoints lie on \(S_i\) and \(S_j\), respectively, and whose
	interior is disjoint from \(\mathcal S\). Replace \(S_i\) by the
	sphere obtained by tubing \(S_i\) to a parallel copy of \(S_j\) along
	\(\gamma\). All other components of \(\mathcal{S}\) are left
	unchanged.
\end{definition}

\begin{lemma}[Spotted-ball slide lemma]
	\label{lem:spotted-ball}
	Let \(R\) be a \(3\)-ball and let
	$
	B_0,B_1,\ldots,B_r\subset\operatorname{Int}R
	$
	be pairwise disjoint \(3\)-balls. Put
	\[
	P
	=
	R\setminus
	\bigcup_{\ell=0}^{r}\operatorname{Int}B_\ell.
	\]
	Then the outer boundary sphere \(\partial R\) can be obtained from
	\(\partial B_0\) by successively tubing \(\partial B_0\) to
	$
	\partial B_1,\ldots,\partial B_r
	$
	along pairwise disjoint arcs in \(P\).
	
	In particular, when the spheres \(\partial B_\ell\) are boundary
	copies of the components of a sphere system obtained by cutting a
	closed \(3\)-manifold along that system, these tubings are induced by
	sphere slides.
\end{lemma}

\begin{proof}
	Choose an embedded tree \(T\subset R\) whose vertices consist of one
	point in each ball \(B_\ell\), and whose edges are disjoint from the
	balls except at their endpoints. We may choose \(T\) so that its edges
	are pairwise disjoint.
	
	A closed regular neighbourhood
	$
	N
	=
	\nu\left(
	B_0\cup\cdots\cup B_r\cup T
	\right)
	$
	is a \(3\)-ball. Starting with \(B_0\), adjoining an edge of \(T\)
	together with the ball at its other endpoint changes the boundary by
	tubing the previous boundary sphere to the boundary of that ball.
	Consequently, \(\partial N\) is obtained from \(\partial B_0\) by the
	stated sequence of tubings.
	
	By the smooth Schoenflies theorem, the embedded ball
	\(N\subset\operatorname{Int}R\) is unknotted in \(R\). Hence
	$
	\overline{R\setminus\operatorname{Int}N}
	\cong S^2\times[0,1],
	$
	so \(\partial N\) is isotopic to \(\partial R\) in \(P\).
\end{proof}


\begin{definition}[Boundary slide]
	\label{def:boundary-slide}
	Let \(Q\) be a compact \(3\)-manifold whose boundary components are
	\(2\)-spheres, and let
	$
	\mathcal S=\{S_1,\ldots,S_k\}
	$
	be a collection of pairwise disjoint embedded \(2\)-spheres in
	\(\operatorname{Int}Q\).
	
	Let \(F\) be a component of \(\partial Q\), and let
	$
	F'\subset\operatorname{Int}Q
	$
	be a parallel copy of \(F\). Choose an embedded arc
	$
	\gamma\subset Q
	$
	with one endpoint on \(S_i\), the other endpoint on \(F'\), and
	\[
	\operatorname{Int}\gamma
	\cap
	\bigl(\mathcal S\cup F'\bigr)
	=
	\varnothing.
	\]
	A \emph{boundary slide of \(S_i\) over \(F\)} replaces \(S_i\) by
	the sphere obtained by tubing \(S_i\) to \(F'\) along \(\gamma\).
	All other components of \(\mathcal S\) are left unchanged.
	The slide is performed in the interior of \(Q\), so the boundary
	\(\partial Q\) remains fixed pointwise.
\end{definition}

\begin{lemma}[Relative uniqueness of complete sphere systems]
	\label{lem:relative-complete-sphere-systems}
	Let \(Q\) be a compact, connected, orientable \(3\)-manifold whose
	boundary is a possibly empty union of \(2\)-spheres. Let
	\(\widehat Q\) be the closed \(3\)-manifold obtained by capping every
	component of \(\partial Q\) with a \(3\)-ball.
	
	Assume that
	$
	\widehat Q
	\cong
	M'\#\bigl(\#_{k}(\s^1\times \s^2)\bigr),
	$
	where \(M'\) is closed, connected, orientable, and irreducible.
	
	Let
	$
	\mathcal S=\{\Sigma_1,\ldots,\Sigma_k\},
	$
	$
	\mathcal A=\{A_1,\ldots,A_k\}
	$
	be two collections of pairwise disjoint smoothly embedded
	\(2\)-spheres in \(\operatorname{Int}Q\). Assume that:
	
	\begin{enumerate}
		\item every component of \(\mathcal S\) and \(\mathcal A\) is
		essential in \(\widehat Q\);
		\item both complements
		$
		Q\setminus\operatorname{Int}\nu(\mathcal S)
		$ and $
		Q\setminus\operatorname{Int}\nu(\mathcal A)
		$
		are connected.
	\end{enumerate}
	
	Then \(\mathcal S\) and \(\mathcal A\) are related by a finite
	sequence of:
	
	\begin{enumerate}
		\item ambient isotopies of \(Q\) fixed on \(\partial Q\);
		\item permutations of the components;
		\item sphere slides of one component over another;
		\item boundary slides over components of \(\partial Q\).
	\end{enumerate}
\end{lemma}

\begin{proof}
	We argue by induction on \(k\). The assertion is immediate for
	\(k=0\). Assume \(k>0\).
	Write
	$
	\partial Q=F_1\sqcup\cdots\sqcup F_m.
    $
	Let \(D_a\) be the \(3\)-ball used to cap \(F_a\) in the construction
	of \(\widehat Q\).
	Put
	$
	C_{\mathcal S}
	=
	Q\setminus\operatorname{Int}\nu(\mathcal S).
	$
	Its boundary consists of the original boundary components \(F_a\)
	and two copies
	$
	\Sigma_i^+,\Sigma_i^-
	$
	of each \(\Sigma_i\). Cap \(\Sigma_i^\pm\) by \(3\)-balls
	\(B_i^\pm\), and cap \(F_a\) by \(D_a\). Denote the resulting closed
	manifold by \(\widehat C_{\mathcal S}\).
	
	Reversing the cutting-and-capping operation along the components of
	\(\mathcal S\) adds \(k\) $3$-dimensional \(1\)-handles. Hence
	$
	\widehat Q
	\cong
	\widehat C_{\mathcal S}
	\#
	\bigl(\#_k(\s^1\times \s^2)\bigr).
	$
	By the Kneser--Milnor uniqueness theorem,
	$
	\widehat C_{\mathcal S}\cong M'.
	$
	In particular, \(\widehat C_{\mathcal S}\) is irreducible.
	
	\medskip
	\noindent
	\emph{Step 1: making the two sphere systems disjoint.}
	
	Put \(\mathcal S\) and \(\mathcal A\) in transverse position and set
	\[
	I(\mathcal S,\mathcal A)
	=
	\sum_{i,j}
	\#\pi_0(\Sigma_i\cap A_j).
	\]
	Suppose that \(I(\mathcal S,\mathcal A)>0\). Choose a circle of
	intersection that is innermost on some \(A_j\). It bounds a disk
	$
	D\subset A_j
	$
	such that
	$
	\operatorname{Int}D\cap\mathcal S=\varnothing.
	$
	Suppose that
	$
	\partial D\subset\Sigma_i.
	$
	
	After cutting \(Q\) along \(\mathcal S\), the disk \(D\) becomes a
	properly embedded disk in \(C_{\mathcal S}\), whose boundary lies,
	say, on \(\Sigma_i^+\). In \(\widehat C_{\mathcal S}\), take a small
	regular neighbourhood
	$
	N=\nu(B_i^+\cup D).
	$
	The manifold \(N\) is diffeomorphic to \(\s^2\times[0,1]\), and its
	two boundary components are parallel copies of the two spheres
	obtained by surgery on \(\Sigma_i\) along \(D\).

	Since $\widehat{C}_{S}$ is irreducible, at least one component of
	$
	\widehat{C}_{S}\setminus \operatorname{Int}N
	$
	adjacent to a component of $\partial N$ is a $3$-ball. Let $R$ be such
	a ball, and let $T\subset \partial N$ be the component satisfying
	$T=\partial R$. We choose the regular neighbourhood $N$ sufficiently
	small so that it is disjoint from every cap ball except $B_i^{+}$.
	
	We distinguish two cases.
	
	Suppose first that $B_i^{-}\subset R$. Apply Lemma~\ref{lem:spotted-ball} to the ball
	$R$, taking $B_i^{-}$ as the distinguished inner ball. Since
	$B_i^{+}\subset \operatorname{Int}N$ and
	$R\cap\operatorname{Int}N=\varnothing$, the ball $B_i^{+}$ is not
	contained in $R$. It follows that the surgery sphere
	$
	T=\partial R
	$
	is obtained from $\partial B_i^{-}$ by successively tubing it to the
	boundaries of the other cap balls contained in $R$. In particular,
	this construction does not require tubing $\partial B_i^{-}$ to the
	opposite copy $\partial B_i^{+}$ of $\Sigma_i$.
	
	Suppose now that $B_i^{-}\not\subset R$. The manifold
	$
	R\cup_{T}N
	$
	is a $3$-ball whose boundary is the other component
	$T'\subset\partial N$. Moreover, by the choice of $N$, the ball
	$B_i^{-}$ is not contained in $R\cup_{T}N$. We may therefore apply
	Lemma~\ref{lem:spotted-ball} to $R\cup_{T}N$, taking $B_i^{+}$ as the distinguished
	inner ball. The sphere
	$
	T'=\partial(R\cup_{T}N)
	$
	is then obtained from $\partial B_i^{+}$ by successively tubing it to
	the boundaries of the other cap balls contained in $R\cup_{T}N$.
	Again, no tubing to the opposite copy $\partial B_i^{-}$ of
	$\Sigma_i$ is required.
	
	Consequently, in either case, one of the two spheres obtained by
	surgery on $\Sigma_i$ along $D$ is obtained from a boundary copy of
	$\Sigma_i$ by tubings only to cap balls of the following types:
	$
	B_{\ell}^{\pm},$ $\ell\neq i,
	$
	or
	$
	D_a,
	$
	where $D_a$ is the ball used to cap a boundary component $F_a$ of
	$Q$. After regluing the two boundary copies of every component of
	$S$, tubing to $\partial B_{\ell}^{\pm}$ is induced by a sphere slide
	of $\Sigma_i$ over $\Sigma_{\ell}$, while tubing to $\partial D_a$ is
	induced by a boundary slide of $\Sigma_i$ over $F_a$. Thus one of the
	two surgery spheres may replace $\Sigma_i$ by a finite sequence of
	sphere slides, boundary slides, and an ambient isotopy fixed on
	$\partial Q$.
	
	Choose this surgery sphere and push the copy of the surgery disk $D$
	slightly off $A_j$. Since $\partial D$ was chosen innermost on $A_j$,
	we have
	$
	\operatorname{Int}D\cap \mathcal S=\varnothing.
	$
	Moreover, since the components of $A$ are pairwise disjoint, the
	interior of $D$ is disjoint from every component of
	$\mathcal A\setminus\{A_j\}$. Hence the replacement introduces no new circles
	of intersection with $\mathcal A$, whereas the circle $\partial D$ disappears.
	Therefore,
	$
	I(\mathcal S,\mathcal A)
	$
	strictly decreases.
	
	Sphere slides and boundary slides preserve the connectedness of the
	complement of the sphere system. Furthermore, after cutting along the
	sphere system and capping all resulting spherical boundary
	components, the capped complement remains diffeomorphic to
	$\widehat{C}_{S}$. Thus the same argument may be applied repeatedly.
	After finitely many repetitions, we obtain
	$
	\mathcal S\cap \mathcal A=\varnothing.
	$

	\medskip
	\noindent
	\emph{Step 2: matching one sphere.}
	
	The spheres \(A_j\) may now be regarded as embedded spheres in
	$
	\widehat C_{\mathcal S}\cong M'.
	$
	Since \(M'\) is irreducible, every \(A_j\) bounds a \(3\)-ball in
	\(\widehat C_{\mathcal S}\).
	
	Choose a component, relabelled \(A_1\), and a ball \(R\) bounded by
	\(A_1\), such that
	\[
	\operatorname{Int}R
	\cap
	(\mathcal A\setminus\{A_1\})
	=
	\varnothing.
	\]
	
	The ball \(R\) contains at least one of the cap balls \(B_i^\pm\).
	Indeed, if it contained none of them, then restoring the
	three-dimensional \(1\)-handles corresponding to \(\mathcal S\)
	would not change \(R\), so \(A_1\) would bound a \(3\)-ball in
	\(\widehat Q\). This contradicts the essentiality of \(A_1\).
	
	Moreover, some pair \(B_i^+,B_i^-\) is separated by \(A_1\).
	Otherwise, for every \(i\), the balls \(B_i^+\) and \(B_i^-\) would
	lie on the same side of \(A_1\). Restoring all the corresponding
	\(1\)-handles would then leave \(A_1\) separating in \(\widehat Q\).
	
	On the other hand,
	$
	Q\setminus\operatorname{Int}\nu(\mathcal A)
	$
	is connected, so \(A_1\) is nonseparating in \(Q\), and therefore
	also in \(\widehat Q\). This is a contradiction.
	
	After interchanging the signs if necessary, choose \(i\) such that
	\[
	B_i^+\subset R,
	\quad
	B_i^-\not\subset R.
	\]
	Apply Lemma~\ref{lem:spotted-ball} to \(R\), using \(B_i^+\) as the
	distinguished inner ball. The lemma expresses
	$
	A_1=\partial R
	$
	as the result of tubing \(\partial B_i^+\) to the boundaries of all
	the other cap balls contained in \(R\).
	
	Tubing to a cap \(B_\ell^\pm\) gives a sphere slide of
	\(\Sigma_i\) over \(\Sigma_\ell\), while tubing to \(D_a\) gives a
	boundary slide over \(F_a\). Since \(B_i^-\not\subset R\), no tubing
	to the opposite copy of \(\Sigma_i\) is required.
	
	It follows that, after sphere slides, boundary slides, and an ambient
	isotopy fixed on \(\partial Q\), the sphere \(\Sigma_i\) agrees with
	\(A_1\). Since the moves are supported in \(R\), they may be chosen
	disjoint from
	$
	\mathcal A\setminus\{A_1\}.
	$
	After permuting the components of \(\mathcal S\), we may therefore
	assume that
	$
	\Sigma_1=A_1
	$
	and that all remaining components are disjoint from this common
	sphere.
	
	\medskip
	\noindent
	\emph{Step 3: induction relative to the common sphere.}
	
	Cut \(Q\) along the common sphere
	$
	S=\Sigma_1=A_1
	$
	without capping the two resulting boundary components. Denote the
	resulting manifold by
	\[
	Q'
	=
	Q\setminus\operatorname{Int}\nu(S).
	\]
	Since the complement of each complete sphere system is connected,
	\(Q'\) is connected. Its boundary is
	\[
	\partial Q'
	=
	\partial Q\sqcup S^+\sqcup S^-,
	\]
	where \(S^+\) and \(S^-\) are the two boundary copies of \(S\).
	
	The remaining sphere systems
	\[
	\mathcal S'
	=
	\{\Sigma_2,\ldots,\Sigma_k\},
	\qquad
	\mathcal A'
	=
	\{A_2,\ldots,A_k\}
	\]
	lie in \(\operatorname{Int}Q'\). Their complements in \(Q'\) are
	connected, since they are precisely the original connected
	complements with the neighbourhood of the common sphere left as
	boundary.
	
	Let \(\widehat Q'\) be obtained by capping every boundary component
	of \(Q'\), including \(S^+\) and \(S^-\). Starting from
	\(\widehat C_{\mathcal S}\cong M'\), the manifold \(\widehat Q'\)
	is obtained by restoring only the \(k-1\) three-dimensional
	\(1\)-handles corresponding to
	\(\Sigma_2,\ldots,\Sigma_k\). Therefore,
	\[
	\widehat Q'
	\cong
	M'\#\bigl(\#_{k-1}(\s^1\times \s^2)\bigr).
	\]
	
	Every component of \(\mathcal S'\) and \(\mathcal A'\) is essential
	in \(\widehat Q'\). Indeed, its complement in \(Q'\), and hence in
	\(\widehat Q'\), is connected, so it is nonseparating.
	
	The induction hypothesis applied to \(Q'\) shows that
	\(\mathcal S'\) and \(\mathcal A'\) are related by:
	
	\begin{enumerate}
		\item ambient isotopies fixed on \(\partial Q'\);
		\item permutations;
		\item sphere slides among the remaining components;
		\item boundary slides over components of
		\[
		\partial Q'
		=
		\partial Q\sqcup S^+\sqcup S^-.
		\]
	\end{enumerate}
	
	Reglue the two boundary copies \(S^+\) and \(S^-\) to recover \(Q\).
	The isotopies fixed on \(\partial Q'\) lift to isotopies of \(Q\)
	fixing the common sphere \(S\). Sphere slides among the remaining
	components lift unchanged. Boundary slides over components of
	\(\partial Q\) remain boundary slides.
	
	Finally, a boundary slide over either \(S^+\) or \(S^-\) becomes,
	after regluing, a sphere slide over the common sphere
	$
	S=\Sigma_1=A_1.
	$
	Thus all moves supplied by the induction hypothesis lift to allowed
	moves in \(Q\), while the common sphere remains fixed.
	
	It follows that \(\mathcal S\) and \(\mathcal A\) are related by
	ambient isotopies fixed on \(\partial Q\), permutations, sphere
	slides, and boundary slides. This completes the induction.
\end{proof}


\begin{proof}[Proof of Theorem \ref{thm1}]
	
Since \(M'\) is irreducible, every smoothly embedded \(2\)-sphere in
\(M'\) bounds a \(3\)-ball. Hence
$
H^{\mathrm{sph}}_{2}(M')=0.
$

Let
$
M=M'\#\bigl(\#_{k}(\s^{1}\times \s^{2})\bigr),
$
and let \(s_i=\{\mathrm{pt}\}\times \s^{2}\) denote the standard
nonseparating sphere in the \(i\)-th \(\s^{1}\times \s^{2}\) summand. We
claim that
\[
H^{\mathrm{sph}}_{2}(M)
=
\bigoplus_{i=1}^{k}\mathbb Z[s_i].
\]

Indeed, let \(\Sigma\subset M\) be an oriented, smoothly embedded
\(2\)-sphere. Let
$
P=P_1\cup\cdots\cup P_k
$
be a collection of pairwise disjoint connected-sum spheres realizing the
decomposition
$
M=M'\#\bigl(\#_{k}(\s^{1}\times \s^{2})\bigr).
$
After an isotopy, we may assume that \(\Sigma\) is transverse to \(P\).

We now eliminate the circles of \(\Sigma\cap P\) by surgery. Choose a
circle of intersection that is innermost on some \(P_i\), and let
\(D\subset P_i\) be the disk that it bounds, with
\(\operatorname{Int}D\cap\Sigma=\varnothing\). Since \(\Sigma\cong S^2\),
this circle separates \(\Sigma\) into two disks. Replacing either of these
disks by a parallel copy of \(D\) produces two embedded \(2\)-spheres
\(\Sigma'\) and \(\Sigma''\) such that, after choosing orientations,
\[
[\Sigma]=[\Sigma']+[\Sigma'']
\qquad\text{in }H_2(M;\mathbb Z),
\]
and the total number of intersections with \(P\) decreases. Repeating
this procedure, we express \([\Sigma]\) as a sum of homology classes
represented by embedded \(2\)-spheres, each of which is disjoint from
\(P\). After capping the boundary spheres of the connected-sum pieces by
\(3\)-balls, each such sphere lies entirely in one of the summands
\(M'\) or \(\s^1\times \s^2\).

Every sphere lying in the \(M'\)-summand represents the zero class,
because \(M'\) is irreducible. On the other hand,
$
H_2(\s^1\times \s^2;\mathbb Z)\cong\mathbb Z
$
is generated by the class of \(\{\mathrm{pt}\}\times \s^2\). Therefore,
every spherical homology class in \(M\) is an integral linear
combination of \([s_1],\ldots,[s_k]\). Since the spheres
\(s_1,\ldots,s_k\) are themselves embedded, the reverse inclusion is
immediate. 

Moreover, the Mayer--Vietoris sequence for the connected-sum
decomposition gives
\[
H_2(M;\mathbb Z)
\cong
H_2(M';\mathbb Z)
\oplus
\bigoplus_{i=1}^{k}H_2(\s^1\times \s^2;\mathbb Z).
\]
Under this isomorphism, the class \([s_i]\) is the standard generator of
the \(i\)-th copy of
\[
H_2(\s^1\times \s^2;\mathbb Z)\cong\mathbb Z.
\]
Consequently, the classes
\([s_1],\ldots,[s_k]\) are linearly independent. Thus
\[
H^{\mathrm{sph}}_{2}(M)
=
\bigoplus_{i=1}^{k}\mathbb Z[s_i]
\cong\mathbb Z^k.
\]

	Since there are no $4$-handles,
	$
	\partial W_3=\partial W,
	$
	which is connected by hypothesis.
	
	
	\underline{(2)$\Rightarrow$(3)}
	
	Put \(M=\partial W_{2}\) and assume that
	$
	C
	=
	M\setminus
	\operatorname{Int}\nu(\Sigma_{1}\cup\cdots\cup\Sigma_{k})
	$
	is connected. Choose an orientation of each sphere \(\Sigma_i\). The boundary
	of \(C\) contains two copies \(\Sigma_i^{+}\) and \(\Sigma_i^{-}\) of each
	\(\Sigma_i\). Since \(C\) is connected, for every \(i\) we may choose a properly
	embedded arc
	$
	\alpha_i\subset C
	$
	whose endpoints lie on \(\Sigma_i^{-}\) and \(\Sigma_i^{+}\), respectively,
	and whose interior is contained in \(\operatorname{Int}C\). Closing \(\alpha_i\) by a normal interval in
	\(\nu(\Sigma_i)\cong \Sigma_i\times[-1,1]\), we obtain an oriented loop
	\(c_i\subset M\). The loops may be chosen transverse to the spheres and,
	after choosing their orientations appropriately, satisfy
	$
	[c_i]\cdot[\Sigma_j]=\delta_{ij}.
	$
	
	Consider the intersection homomorphism
	\[
	\Lambda\colon H^{\mathrm{sph}}_{2}(M)\longrightarrow \mathbb Z^{k},
	\qquad
	\Lambda(x)
	=
	\bigl([c_1]\cdot x,\ldots,[c_k]\cdot x\bigr).
	\]
	For every \(j\), we have
	$
	\Lambda([\Sigma_j])=e_j,
	$
	where \(e_j\) denotes the \(j\)-th standard basis vector of \(\mathbb Z^k\).
	Consequently, \(\Lambda\) is surjective.
	
	We have already established that
	$
	H^{\mathrm{sph}}_{2}(M)\cong \mathbb Z^{k}.
	$
	Thus \(\Lambda\) is a surjective homomorphism between free abelian groups of
	the same finite rank, and therefore it is an isomorphism. Since
	$
	[\Sigma_j]=\Lambda^{-1}(e_j)
	$
	for \(j=1,\ldots,k\), the classes
	$
	[\Sigma_1],\ldots,[\Sigma_k]
	$
	form a \(\mathbb Z\)-basis of \(H^{\mathrm{sph}}_{2}(M)\).


	\underline{(3)$\Rightarrow$(2)}

	Assume that
	$
	M\setminus
	\operatorname{Int}\nu(\Sigma_1\cup\cdots\cup\Sigma_k)
	$
	is disconnected, and let $U$ be the closure of one of its connected
	components. After choosing orientations, the oriented boundary of $U$
	gives a relation
	\[
	\sum_{i=1}^k\varepsilon_i[\Sigma_i]=0,
	\qquad
	\varepsilon_i\in\{-1,0,1\}.
	\]
	This relation is nontrivial. Indeed, consider the dual graph of the
	sphere system: its vertices are the components of the complement and
	its edges correspond to the spheres $\Sigma_i$. Since $M$ is connected,
	the dual graph is connected. Since the complement has more than one
	component, the vertex corresponding to $U$ is incident to an edge
	joining it to another vertex. The corresponding coefficient
	$\varepsilon_i$ is therefore equal to $\pm1$.
	
	Thus, the displayed relation is nontrivial, contradicting the linear
	independence of
	$
	[\Sigma_1],\ldots,[\Sigma_k].
	$
	Therefore, the complement is connected.

	\underline{(1)$\Rightarrow$(2)}
	
	Let
	$
	\mathcal A=\{A_1,\ldots,A_k\}
	$
	be the collection of the actual attaching spheres of the $k$
	$3$-handles of $W$ in
	$
	M=\partial W_2.
	$
	Since the handle decomposition of $W$ contains no $4$-handles, we have
	$
	\partial W_3=\partial W,
	$
	which is connected by hypothesis.
	
	Attaching the $3$-handles along
	$A_1,\ldots,A_k$ is equivalent, on the boundary, to performing
	simultaneous sphere surgery on these spheres. More precisely,
	$\partial W_3$ is obtained from
	$
	C_{\mathcal A}
	=
	M\setminus
	\operatorname{Int}\nu(A_1\cup\cdots\cup A_k)
	$
	by gluing a $3$-ball to each of its $2k$ spherical boundary
	components.
	
	If $C_{\mathcal A}$ were disconnected, gluing $3$-balls to its
	boundary components could not join two distinct connected components.
	It would follow that $\partial W_3$ is disconnected, contrary to the
	connectedness of $\partial W$. Therefore,
	$
	M\setminus
	\operatorname{Int}\nu(A_1\cup\cdots\cup A_k)
	$
	is connected.
	
	Now assume condition~(1). Since
	$
	M\setminus
	\operatorname{Int}\nu(A_1\cup\cdots\cup A_k)
	$
	is connected, the already proved implication $(2)\Rightarrow(3)$ shows
	that
	$
	[A_1],\ldots,[A_k]
	$
	form a $\mathbb Z$-basis of $H_2^{\mathrm{sph}}(M)$.
	
	Ambient isotopies and permutations preserve this property. Moreover,
	a $3$--$3$ handle slide of $A_i$ over $A_j$ replaces the homology
	class $[A_i]$ by
	$
	[A_i]\pm[A_j],
	$
	while leaving the other classes unchanged. This is an elementary
	unimodular change of basis.
	
	Therefore condition~(1) implies that
	$
	[\Sigma_1],\ldots,[\Sigma_k]
	$
	form a $\mathbb Z$-basis of $H_2^{\mathrm{sph}}(M)$. By the already
	proved implication $(3)\Rightarrow(2)$,
	$
	M\setminus
	\operatorname{Int}\nu(\Sigma_1\cup\cdots\cup\Sigma_k)
	$
	is connected.

	\underline{(3)$\Rightarrow$(1)}
	
	Put
	$
	M=\partial W_2,
	$
	and let
	$
	\mathcal S=\{\Sigma_1,\ldots,\Sigma_k\}.
	$
	Let
	$
	\mathcal A=\{A_1,\ldots,A_k\}
	$
	be the collection of the actual attaching spheres of the $k$
	$3$-handles of $W$.
	
	If $k=0$, both the implication and the final surgery statement are
	immediate. Hence, assume that $k>0$.
	
	By condition~(3), the classes
	$
	[\Sigma_1],\ldots,[\Sigma_k]
	$
	form a $\mathbb Z$-basis of
	$H_2^{\mathrm{sph}}(M)$. In particular, each class
	$[\Sigma_i]$ is nonzero, and hence each sphere $\Sigma_i$ is
	essential. Moreover, by the already proved implication
	$(3)\Rightarrow(2)$,
	$
	M\setminus
	\operatorname{Int}\nu(\Sigma_1\cup\cdots\cup\Sigma_k)
	$
	is connected.
	
	We next consider the actual attaching sphere system
	$\mathcal A$. Attaching the $3$-handles along
	$A_1,\ldots,A_k$ performs simultaneous sphere surgery on
	$\mathcal A$. Thus $\partial W_3=\partial W$ is obtained from
	$
	C_{\mathcal A}
	=
	M\setminus
	\operatorname{Int}\nu(A_1\cup\cdots\cup A_k)
	$
	by gluing a $3$-ball to each spherical boundary component.
	Since $\partial W$ is connected, $C_{\mathcal A}$ must be
	connected: gluing $3$-balls to boundary components cannot join
	distinct connected components.
	
	It follows also that every $A_i$ is essential. Indeed, if some
	$A_i$ bounded a $3$-ball in $M$, then $A_i$ would separate $M$,
	and, since the remaining attaching spheres are disjoint from
	$A_i$, the complement $C_{\mathcal A}$ would be disconnected.
	
Consequently, \(\mathcal S\) and \(\mathcal A\) are two collections
of \(k\) pairwise disjoint essential \(2\)-spheres in
$
M\cong M'\#\bigl(\#_k(\s^1\times \s^2)\bigr),
$
and both collections have connected complements. By
Lemma~\ref{lem:relative-complete-sphere-systems}, they are related by ambient
isotopies, a permutation of their components, and sphere slides.

A sphere slide of one attaching sphere over another is precisely the
operation induced on the attaching spheres by a \(3\)--\(3\) handle
slide. Therefore, up to a permutation and \(3\)--\(3\) handle slides,
the spheres \(\Sigma_1,\ldots,\Sigma_k\) may be isotoped to the actual
attaching spheres \(A_1,\ldots,A_k\).

	This proves condition~(1).
	
	Finally, cutting $M$ along the $\Sigma_i$ and capping the resulting $2k$
	spherical boundary components by $3$-balls gives a connected
	$3$-manifold $M_\Sigma$. Reversing this operation attaches $k$
	$3$-dimensional $1$-handles, hence
	\[
	M \cong M_{\Sigma} \# \Bigl(\#_k(\s^1\times \s^2)\Bigr).
	\]
	Comparing with the given prime decomposition of $M$ and applying the
	Kneser--Milnor uniqueness theorem yields $M_\Sigma\cong M'$.
\end{proof}

Theorem \ref{thm1} characterizes the geometric sphere systems that represent the attaching system: a collection of $k$ pairwise disjoint embedded spheres is equivalent, up to isotopy, permutation, and sphere slides, to the actual attaching system if and only if its homology classes form a basis of $H_2^{\mathrm{sph}}(\partial W_2)$.
Under the assumptions of Theorem \ref{thm1}, we thus obtain a uniqueness result for $3$-handle attachments.

\begin{corollary}\label{cor03} Let $N$ be a compact smooth $4$-manifold with the boundary $\partial N=M'\#(\#_k\;\s^{1}\times \s^{2})$ with $M'$ irreducible. Denote by $N'=N\cup h_3^{(1)}\cup\ldots \cup h_3^{(k)}$ a manifold, obtained by attaching $k$ four-dimensional $3$-handles to $\partial N$. If $\partial N'$ is connected and the homology classes of the attaching spheres $\Sigma_1,\ldots , \Sigma_k$ form a basis of $H_2^{\mathrm{sph}}(\partial N)$, then the resulting manifold $N'$ is unique up to diffeomorphism.
\end{corollary}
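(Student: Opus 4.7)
The plan is to reduce the corollary directly to Theorem \ref{thm1} by showing that any two admissible choices of $3$-handle attaching spheres are isotopic in $\partial N$ up to a permutation of indices, and therefore produce diffeomorphic $4$-manifolds.

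Concretely, I would fix one admissible collection $\Sigma_1,\ldots,\Sigma_k\subset \partial N$ producing $N'=N\cup h_3^{(1)}\cup\ldots\cup h_3^{(k)}$, and let $\Sigma'_1,\ldots,\Sigma'_k\subset \partial N$ be any second collection of disjoint embedded $2$-spheres whose homology classes also form a basis of $H_2^{\mathrm{sph}}(\partial N)$ and for which the resulting $4$-manifold again has connected boundary. I would view $N'$ as the manifold $W$ of Theorem \ref{thm1}, with $N$ playing the role of $W_2$ and the $3$-handles attached along $\{\Sigma_i\}$. The hypotheses of Theorem \ref{thm1} then hold: the handle decomposition has no $4$-handles, $\partial N'$ is connected by assumption, and the number of $3$-handles equals $\mathrm{rank}\,H_2^{\mathrm{sph}}(\partial N)$ since the collection $\{[\Sigma_i]\}$ already exhibits a basis of that size.

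Next I would apply the $(\Leftarrow)$ direction of Theorem \ref{thm1} to the second collection $\{\Sigma'_i\}$: because $[\Sigma'_1],\ldots,[\Sigma'_k]$ also form a basis of $H_2^{\mathrm{sph}}(\partial N)$, some ambient isotopy of $\partial N$ together with a permutation of indices carries $\Sigma'_i$ onto the attaching sphere $\Sigma_i$ of the $i$-th $3$-handle of $N'$. Since a handle attachment depends only on the isotopy class of its attaching embedding (Isotopy Extension Theorem, recalled in Section \ref{Preliminaries}), attaching $3$-handles to $N$ along $\{\Sigma'_i\}$ yields a $4$-manifold diffeomorphic to $N'$, and reordering the $3$-handles does not alter the diffeomorphism type; this is the desired uniqueness.

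I do not anticipate any substantive obstacle beyond the invocation of Theorem \ref{thm1}: the corollary is a direct repackaging of that theorem as a uniqueness statement for $N'$. The only thing to verify explicitly is that the rank and connectivity hypotheses of Theorem \ref{thm1} are supplied by the assumptions of the corollary, which is immediate from counting the given basis.
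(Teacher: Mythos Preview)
Your proposal is correct and matches the paper's approach: the paper states Corollary \ref{cor03} without separate proof, treating it as the uniqueness repackaging of Theorem \ref{thm1} that you describe. Your explicit verification that the rank and connectivity hypotheses of Theorem \ref{thm1} are supplied by the corollary's assumptions, together with the appeal to the $(\Leftarrow)$ direction and the Isotopy Extension Theorem, is exactly the intended derivation.
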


\begin{remark}\label{rem01} Let $W$ be a compact smooth $4$-manifold with the boundary $\partial W_2=M'\#(\#_k\;\s^{1}\times \s^{2})$ with $M'$ irreducible. Assume that the number of $3$-handles in a handle decomposition (with no $4$-handles) of $W$ equals $k$. To visualize the attaching $2$-spheres, we can trace the basis [$\{point_1\}\times \s^2$],\ldots, $[\{point_k\}\times \s^2]$ back to the Kirby diagram for $W$, where $\{point_i\}\times \s^2$ lies in the $i$-th summand of the part $(\#_{k}\s ^{1}\times \s ^{2})$ of the boundary.
	
In the extended diagram, that contains drawings of pairwise disjoint essential spheres $\Sigma_j$, verifying that these spheres are isotopic to the actual $3$-handle attaching spheres (up to permutation and handle slides) reduces to finding $k$ loops $c_1,\dots,c_k\subset \partial W_2$, such that $[c_i]\cdot [\Sigma_j] = \delta_{ij}$ for all $i,j$ (where $\delta_{ij}$ is the Kronecker delta). This also follows from duality and intersection pairing; see \cite[II. \S 8]{NR04}.

\end{remark}


\subsection{An example}\label{An_example}

Let $E_2$ denote the oriented $\di^2$-bundle over $\s^2$ with Euler number $2$. Its Kirby diagram consists of a single $2$-framed unknot, and $\partial E_2\cong L(2,1)\cong \mathbb{R}P^3.$

Define the $0$--$1$--$2$ handlebody $W_2 = E_2\natural(\s^1\times \di^3),$
where $\natural$ denotes boundary connected sum. Equivalently, $W_2$ is obtained from a single $0$-handle by attaching one $1$-handle and one $2$-handle, where the attaching circle of the $2$-handle is a $2$-framed unknot split from the dotted circle representing the $1$-handle.

\begin{figure}[h!t]
	\begin{center}
		\begin{overpic}[scale=0.2]{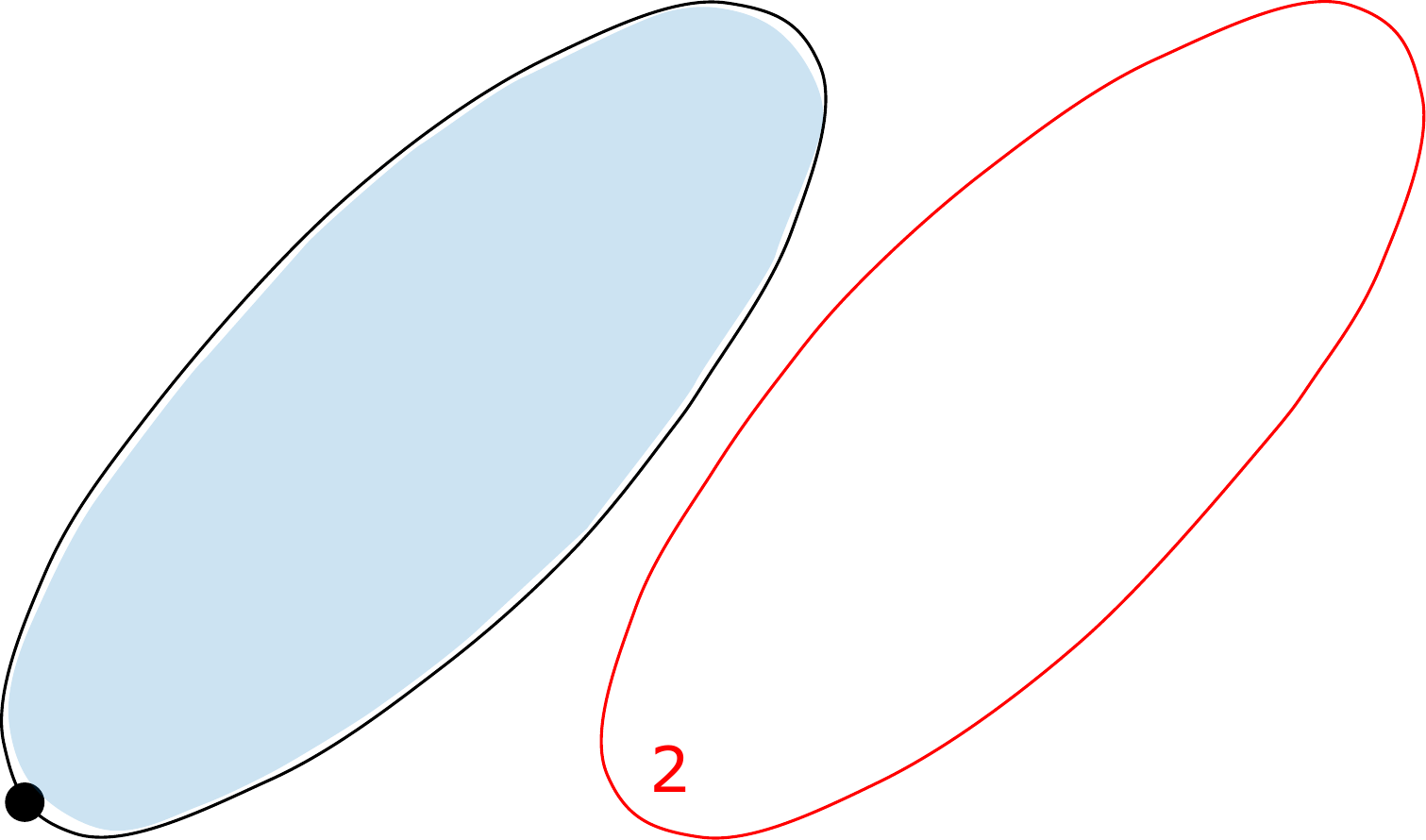}
		\end{overpic}
		\caption{A diagram $\widehat{\mathcal{D}}(W)$. \label{exp52c}}
	\end{center}
\end{figure}

\begin{figure}[ht]
	\centering
	\begin{overpic}[scale=0.85]{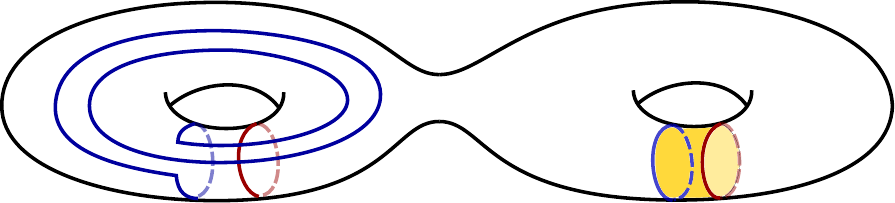}
		\put(24,  2){$\alpha _1$}
		\put(43, 10){$\beta _{1}$}
		\put(79,  4){$\alpha _2$}
		\put(69, 4){$\beta _{2}$}
	\end{overpic}
	\caption{The attaching sphere of the $3$-handle in the Heegaard diagram for
		$\partial W_2$. \label{heeg_diagram3}}
\end{figure}

The boundary connected-sum description gives
$$
\partial W_2
\cong
\partial E_2\#\partial(\s^1\times \di^3)
\cong
\mathbb{R}P^3\#(\s^1\times \s^2).
$$

Let
$\Sigma\subset\partial W_2$
be the standard nonseparating sphere in the $\s^1\times \s^2$ summand;
that is,

$\Sigma=\{\mathrm{pt}\}\times \s^2
\subset \s^1\times \s^2.$
Equivalently, $\Sigma$ is the belt sphere of the $1$-handle. In the
extended Kirby diagram, the visible part of $\Sigma$ is represented by
a shaded disc bounded by the dotted component.

We now define
$
W=W_2\cup_{\Sigma}h_3,
$
where $h_3$ is a $4$-dimensional $3$-handle attached along $\Sigma$.
Surgery on the nonseparating sphere $\Sigma$ removes the
$\s^1\times \s^2$ summand from the boundary. Consequently,
$
\partial W\cong\mathbb{R}P^3.
$
In particular, $\partial W$ is connected.

The handle decomposition of $W$ has one handle of each index
$0,1,2,3$ and no $4$-handles:
$
|h_0|=|h_1|=|h_2|=|h_3|=1,
\;
|h_4|=0.
$
Moreover,
$$
H_2^{\mathrm{sph}}(\partial W_2)
\cong
H_2(\s^1\times \s^2;\mathbb Z)
\cong
\mathbb Z,
$$
and $[\Sigma]$ is a generator. Thus
\[
\operatorname{rank}
H_2^{\mathrm{sph}}(\partial W_2)=1,
\]
which equals the number of $3$-handles.

A small meridian of the dotted component intersects $\Sigma$
algebraically once. Hence the homology class $[\Sigma]$ is dual to the
class represented by this meridian and forms a basis of
$H_2^{\mathrm{sph}}(\partial W_2)$. 
The intersection computation verifies diagrammatically that $[\Sigma]$ is the required generator of $H_2^{\mathrm{sph}}(\partial W_2)$, in accordance with Remark \ref{rem01}.

Finally,
$
\pi_1(W)\cong\pi_1(W_2)\cong\mathbb Z,
$
because the split $2$-handle does not introduce a relation involving
the generator corresponding to the $1$-handle, and the attachment of a
$3$-handle does not change the fundamental group. Therefore, this
example is not covered by \cite{Tr82}. It is also not covered directly
by \cite{LP72}, since
$
\partial W_2
\cong
\mathbb{R}P^3\#(\s^1\times \s^2)
$
has a nontrivial irreducible summand.

In Figure \ref{exp52c}, we show the extended Kirby diagram $\widehat{\mathcal{D}}(W)$ with the visible part of the essential $2$-sphere drawn in blue.
In Figure \ref{heeg_diagram3}, the attaching sphere (yellow) of the $3$-handle in a Heegaard diagram for $\partial W_{2}$ is shown.

\section*{Acknowledgements}

Both authors contributed equally to this work. The first author was supported by the Slovenian Research Agency grants P1-0292, N1-0278 and J1-4031.



{\footnotesize

}

\end{document}